\documentclass[11pt]{article}

\usepackage{amsmath,amssymb,amsthm} %,color}
\usepackage[unicode,breaklinks=true,colorlinks=true]{hyperref}
\usepackage[capitalise]{cleveref}
\usepackage[dvipsnames]{xcolor}
\usepackage{mathrsfs}
\usepackage{verbatim}
\usepackage{mathtools}
\usepackage[normalem]{ulem}

%\usepackage{tikz}
%\usetikzlibrary{decorations.pathreplacing}

% for \sout strike out
\usepackage{soul}
\usepackage{marginnote}
%\usepackage[top=1in, bottom=1in, left=0.2in, right=2.3in, marginparwidth=2in, marginparsep=0.1in]{geometry}

%\usepackage[top=0.2in, bottom=0.5in, left=0.2in, right=2.3in, marginparwidth=2in, marginparsep=0.1in, footskip=16pt]{geometry}\renewcommand{\thepage}{\arabic{page}\hfill}

% no \snt mode
%\usepackage[top=1in, bottom=1in, left=1in, right=1.25in, marginparwidth=1in, marginparsep=0.1in]{geometry}

%\usepackage[top=1in, bottom=1in, left=1in, right=1in, marginparwidth=1in, marginparsep=0.1in]{geometry}
\usepackage[top=1in, bottom=1in, left=1in, right=1in, marginparwidth=1in, marginparsep=0.1in]{geometry}

\numberwithin{equation}{section}

\newtheorem{theorem}{Theorem}[section]

\newtheorem{lemma}[theorem]{Lemma}
\newtheorem{definition}[theorem]{Definition} 

\newtheorem{corollary}[theorem]{Corollary}

\theoremstyle{remark}
\newtheorem{remark}[theorem]{Remark}

\definecolor{darkblue}{rgb}{0,0,0.7}
\definecolor{darkred}{rgb}{0.6,0,0}

\newcommand{\cmp}[1]{{\color{Mahogany}{#1}}}

 % red
 % orange

 %sienna

%\newcommand{\crp}[1]{{\color[RGB]{160,32,240}{#1}}} % purple

\newcommand{\snt}[1]{{\color[rgb]{0,0,1}$\bullet$}\vspace{-10pt}\marginpar{\color[rgb]{0,0,0.8}$\bullet$ \footnotesize #1}\vspace{10pt}}

% margin note blue: good inside display; use \endgraf instead of \par

\newcommand{\al}{\alpha}
\newcommand{\be}{\beta}

\newcommand{\ga}{{\gamma}}

\newcommand{\la}{\lambda}

\newcommand{\td}{\tilde}

\newcommand{\De}{\Delta}
\renewcommand{\th}{\theta}

\newcommand{\Bp}{{\dot B_{p,\I}^{-1+\frac 3 p}}}

\newcommand{\R}{{\mathbb R }}

\newcommand{\N}{{\mathbb N}}

\newcommand{\nb}{{\nabla}}

\newcommand{\I}{\infty}
 
\renewcommand{\div}{\mathop{\mathrm{div}}}

\newcommand{\supp}{\mathop{\mathrm{supp}}}

\newcommand{\donothing}[1]{{}}

\newcommand{\EQ}[1]{\begin{equation}\begin{split} #1 \end{split}\end{equation}}
\newcommand{\EQN}[1]{\begin{equation*}\begin{split} #1 \end{split}\end{equation*}}

%\makeatletter
%\newcommand{\xRightarrow}[2][]{\ext@arrow 0359\Rightarrowfill@{#1}{#2}}
%\makeatother

\newcommand{\loc}{\mathrm{loc}}

\newcommand{\uloc}{\mathrm{uloc}}

%
%\usepackage{accents}
%
%\usepackage{accents}
%\newcommand*{dt}[1] {\accentset{\mbox{\large\bfseries .}}{#1}}
%\newcommand*{\ddt}[1]{\accentset{\mbox{\large\bfseries .\hspace{-0.25ex}.}}{#1}}
%\newcommand*{\Circ}[1]{\accentset{\mbox{\large\bfseries $\scriptstyle\circ$}}{#1}}

%\renewcommand{\baselinestretch}{1.05}

%\title[Bounds on non-uniqueness and asymptotic profiles for NS flows]
\title{Estimation of non-uniqueness and short-time asymptotic expansions for Navier-Stokes flows}
\author{Zachary Bradshaw \and Patrick Phelps} 
%\email{zb002@uark.edu}
%\email{pp010@uark.edu}
%\address{Department of Mathematical Sciences
%309 SCEN
%University of Arkansas
%Fayetteville, AR 72701}
%\thanks{The research of Z.~Bradshaw is supported in part by the Simons Foundation}
%\address{Department of Mathematical Sciences
%309 SCEN
%University of Arkansas
%Fayetteville, AR 72701}
\date{\today}

\begin{document}

\maketitle 
 
\begin{abstract}
There is considerable evidence that solutions to the non-forced 3D Navier-Stokes equations in the natural energy space are not unique. Assuming this is the case, it becomes important to quantify how non-uniqueness evolves. In this paper we provide an algebraic estimate for how rapidly two possibly non-unique solutions can separate over a compact spatial region in which the initial data has sub-critical regularity. Outside of this compact region, the data is only assumed to be in the scaling critical weak Lebesgue space and can be large. In order to establish this separation rate, we develop a new spatially local, short-time asymptotic expansion which is of independent interest. 
\end{abstract}

%\tableofcontents
\section{Introduction}

The Navier-Stokes equations, 
\EQ{\label{eq.NS}
\partial_t u -\Delta u +u\cdot\nb u+\nb p = 0;\qquad \nb \cdot u=0,
}
govern the evolution of a viscous incompressible flow's velocity field $u$, which is a vector, and its associated scalar pressure $p$.
The system is supplemented with a divergence free initial datum $u_0$. We consider the problem on $\R^3\times (0,T)$, where $0<T\leq \I$.
A foundational mathematical treatment of the problem was given by Leray in \cite{leray} where global weak solutions were constructed for finite energy data. Solutions with the properties of those constructed by Leray are referred to as Leray weak solutions.
Recent work suggests that uniqueness does not hold in the class of Leray weak solutions. Indeed, non-uniqueness has been affirmed in weaker classes than the Leray class \cite{BuckmasterVicol} and within the Leray class for the forced Navier-Stokes equations \cite{AlBrCo}. Within the Leray class and with no forcing, a research program of Jia and \v Sver\' ak \cite{JS,JiaSverakIll}, as well as the numerical work of Guillod and \v Sver\' ak \cite{GuillodSverak}, supports non-uniqueness. This program first establishes non-uniqueness in a class of solutions with large data in the critical weak Lebesgue space---this is the type of solutions we presently consider and give a precise definition below. While the evidence suggests non-uniqueness, there is not a clear picture of how non-uniqueness would evolve.
In this note, we take the view that solutions are not unique and seek to quantify how rapidly distinct solutions can separate as they evolve from a shared initial state.
In particular, we are interested in the following question:
\begin{quote}
 How can non-uniqueness be quantified in terms of local properties of the initial data? 
\end{quote}
To answer this question we seek conditions so that, given some divergence free $u_0$, ball $B$, positive exponent $\sigma$, time $T>0$, and weak solutions $u_1$ and $u_2$ to \eqref{eq.NS} both evolving from $u_0$, we have
 \[
 \| u_1 - u_2\|_{L^\I(B)} (t)\lesssim t^{ \sigma},
 \]
for all $0<t<T$. We refer to bounds like the above as an ``estimation of non-uniqueness'' and the right-hand side as a ``separation rate.''

A preliminary perspective on this question follows from the local smoothing of Jia and \v Sver\' ak \cite{JS}.
Local smoothing says that, if $u_0$ is sufficiently regular in a ball $B$, then a solution $u$ remains regular on $B'\times [0,T]$ for some $T>0$, where $B'\Subset B$. This can be viewed as saying that the non-local effects of the pressure are not strong enough to overcome the local regularity of the data. Local regularity is proven in \cite{JS} by showing that, for solutions in the local Leray class,\footnote{This is a more general class than the Leray class and was introduced by Lemari\'e-Rieusset. See \cite{LR} and the later papers \cite{BT8,JS,KS,KwTs,KMT} for useful properties. Local Leray solutions are sometimes referred to as local energy solutions.} if $u_0|_B\in L^p (B)$ for some $3<p\leq \I$ and $U$ is the strong solution to the Navier-Stokes equations with initial data a divergence free localization of $u_0$ to $B$, then $u-U \in C^\ga_{\text{parabolic}} ( B'\times [0,T])$ where $\ga=\ga(p) \in (0,1)$. Note that in the definition of $C^\ga_{\text{parabolic}} ( B'\times [0,T])$, the exponent for the time variable is $\ga/2$. Since $U$ is uniquely determined by $u_0$, this implies that, for possibly distinct solutions $u_1$ and $u_2$ with the same data $u_0$, we have 
\EQ{\label{ineq.JSseparation}
 \| u_1- u_2\|_{L^\I(B')}(t)\leq \| u_1- U\|_{L^\I(B')}(t)+ \| U- u_2\|_{L^\I(B')}(t)\lesssim t^{\frac \ga 2}.
}
Because $\ga/2<1$, this allows a dramatic jump between the two solutions at small times. 

A stronger separation rate is identified for discretely self-similar solutions, i.e.~solutions satisfying $u^\la(x,t):=\la u(\la x,\la^2t)$, for some $\la >1$, with data in $L^p_\loc(\R^3\setminus \{0\})$ for $3<p\leq \I$ \cite{BP1}. There, due to the global scaling properties of the solution, 
\[
 |u_1-u_2|(x,t)\lesssim \frac {t^{\frac 3 2}} {(|x|+\sqrt t)^{4}},
\]
 outside of a space-time paraboloid $|x|\geq R_0\sqrt t$, for some $R_0\ge0$. Away from $x=0$, this gives the separation rate $t^{3/2}$, which is stronger than \eqref{ineq.JSseparation} for $t\leq 1$. Although we do not have a proof, we expect the rate $t^{3/2}$ is optimal because it arises in \cite{BP1} from pointwise bounds for gradients of the Oseen tensor \cite{VAS} which seem unavoidable. The solutions in \cite{BP1} have a great deal of structure due to their assumed scaling invariance and it is natural to seek separation rates under relaxed conditions. 
 
 In this paper, we almost recover the separation rate $t^{3/2}$ for DSS solutions with no scaling assumption on $u_0$. 
 We take our initial data to be in $L^{3,\I}(\R^3)$, which coincides with the weak Lebesgue space $L^3_w$ and is a Lorentz space.\footnote{$L^{3,\I}$ includes all the DSS data considered in \cite{BP1}. It is important in the analysis of the Navier-Stokes equations as an endpoint space where many desirable features such as regularity or uniqueness break down. For example, there is a time-local unique strong solution when $u_0$ is possibly large in $L^3$ \cite{Kato}, but this is unknown in the larger space $L^{3,\I}$. It is a \textit{critical} space in that it is scaling invariant with respect to the scaling of \eqref{eq.NS}.} 
 If, additionally, $u_0|_{B}\in L^p(B)$ for a ball $B$ and some $3<p\leq \I$, then we show there exists a time $T>0$ so that, for any $\sigma<3/2$, any two weak solutions $u_1$ and $u_2$ in a certain class satisfy
 \[
 \|u_1-u_2\|_{L^\I(B')}(t)\lesssim t^\sigma,
 \]
where $B'\Subset B$ and $0<t<T$. 
 This class of initial data is motivated by a natural type-I blow-up scenario wherein a strong solution $u$ defined on $\R^3 \times (-1,0)$ satisfying 
\[
 |u(x,t)|\lesssim \frac 1 {|x|+\sqrt {-t}},
\]
develops a singularity at the space-time origin. The singular profile would satisfy $$|u(x,0)|\lesssim |x|^{-1}\in L^{3,\I}.$$
Because uniqueness is not expected for large $L^{3,\I}$ data, upon singularity formation the solution might branch into distinct flows. In this scenario, our theorem provides an upper bound on how fast the branching solutions can separate away from the singularity. It is worth pointing out that our main theorem can handle singular profiles which are worse than $|x|^{-1}$. Related to this, the initial data in \cite{BP1} is only locally critical at the origin; it is locally sub-critical\footnote{We say the space $X$ is \textit{sub-critical} if $\|u_0\|_X = \la^\al \|u_0^\la \|_X$ where $\al>0$. Examples of critical spaces are $L^p$ for $p\in (3,\I]$. Typically, inclusion in sub-critical spaces controls small scales and leads to regularity. For \textit{super-critical} spaces, $\al<0$ and small scales are typically not controlled. For critical spaces, small scales are usually controlled to an extent when $C_c^\I$ is dense in the space. Critical spaces where this fails, like $L^{3,\I}$, are referred to as \textit{ultra-critical}.} everywhere else. In our   theorem, the only sub-critical assumption is within the ball $B$; the data can have $L^{3,\I}$ singularities anywhere else. 

Before stating our result we define the class of solutions we have in mind, which was originally developed by Barker, Seregin and \v Sver\' ak \cite{BaSeSv} and extends ideas in \cite{SeSv}. This notion of solution has since been extended to non-endpoint critical Besov spaces of negative smoothness \cite{AB}. 

\begin{definition}[Weak $L^{3,\I}$-solutions] Let $T>0$ be finite. Assume $u_0\in L^{3,\I}$ is divergence free. We say that $u$ and an associated pressure $p$ comprise a weak $L^{3,\I}$-solution if \begin{itemize}
 \item $(u,p)$ satisfies \eqref{eq.NS} distributionally,
 \item $u$ satisfies the local energy inequality of Scheffer \cite{VS76b} and Caffarelli, Kohn and Nirenberg \cite{CKN} for non-negative test functions compactly supported in $\R^3\times (0,T)$,
 \item for every $w\in L^2$, the function 
 \[
 t\to \int w(x)\cdot u(x,t)\,dx,
 \]
 is continuous on $[0,T]$,
 \item $\td u :=u-e^{t\Delta}u_0$ satisfies, for all $s\in (0,T)$,
 \EQ{\label{ineq.BSSbound}
 \sup_{0<s<t}\| \td u \|^2_{L^2} (s) +\bigg(\int_0^t \| \nb \td u\|_{L^2}^2(s)\,ds \bigg)^\frac 1 2 <\I,
 }
 and
\EQ{ 
 \| \td u\|_2^2 +2\int_0^t \int|\nb \td u|^2\,dx\,ds\leq 2\int_0^t \int ( e^{t\Delta}u_0 \otimes \td u + e^{t\Delta}u_0 \otimes e^{t\Delta}u_0) : \nb \td u\,dx\,ds.
 }
\end{itemize}

\end{definition}

In \cite{BaSeSv}, weak solutions are constructed which satisfy the above definition for all $T>0$. 
 Also, due to their spatial decay, weak $L^{3,\I}$-solutions are mild,\footnote{Although this can be proved directly, it also follows from \cite{BT7} or \cite{LR}.} which means they satisfy the formula
\[
u(x,t)=e^{t\Delta}u_0 - \int_0^te^{(t-s)\Delta} \mathbb P \nb \cdot (u\otimes u)\,ds,
\]
 where $\mathbb P$ is the Leray projection operator. The nonlinear contribution to $u$ is a bilinear operator which denote by
 \[
 B(f,g):= \int_0^te^{(t-s)\Delta} \mathbb P \nb \cdot (f\otimes g)\,ds.
 \]

An important observation in \cite{BaSeSv} is that the nonlinear part of a weak $L^{3,\I}$-solution satisfies a dimensionless energy estimate, namely
\EQ{\label{ineq.BSSdecay}
 \sup_{0<s<t}\| \td u \|_{L^2} (s) +\bigg(\int_0^t \| \nb \td u\|_{L^2}^2(s)\,ds \bigg)^\frac 1 2 \lesssim_{u_0} t^{\frac 1 4}.
}
We emphasize that the energy associated with $\td u$ vanishes at $t=0$. This decay property will be essential in our work. It appeared earlier in the \textit{a priori} estimates of the weak discretely self-similar solutions constructed in \cite{BT1} as well as \cite{SeSv}, which is the precursor to \cite{BaSeSv}. 
It is used in the Calderon-type splitting\footnote{See \cite{Calderon}.} construction in \cite{BaSeSv} to deplete a time singularity.
In \cite{AB}, it is established in Besov spaces with $e^{t\Delta}u_0$ replaced by higher Picard iterates, which are defined below.
Other dimensionless estimates can be derived from \eqref{ineq.BSSdecay}; indeed, below we prove that, for $q\in (3/2,3)$,
 \EQ{\label{ineq:newEstimate}
\| \td u\|_{L^r(0,T;L^q)}\lesssim_{q,u_0} T^{\frac 1 2}\quad \text{for} \quad r = \frac {2q} {2q-3}.
%r= \frac 2 {1-2\th} \quad \text{and}\quad \th = \frac 3 {2q}-\frac 1 2.
} Given \eqref{ineq.BSSdecay}, this is primarily of interest when $q<2$.

As pointed out in \cite{BaSeSv}, \eqref{ineq.BSSdecay} can be viewed as an estimate on the separation rate of two weak $L^{3,\I}$-solutions since, denoting two such solutions with the same data by $u_1$ and $u_2$, we have
\[
 \| u_1 - u_2\|_{L^2}(t)\lesssim \| \td u_1\|_{L^2}(t)+\|\td u_2\|_{L^2}(t)\lesssim t^{\frac 1 4}.
\]
Notably, this is a global estimate. For data in $L^{3,\I}$, global estimates are confined to super-critical norms since we do not expect $\td u$ to be in a stronger space than $u$---indeed, any singularity at a positive time is carried by $\td u$ not by $e^{t\Delta}u_0$. Such singularities can possibly occur at arbitrarily small times. Therefore, if we seek a finer estimate (i.e.~using a sub-critical norm) on the separation of the flows using the reasoning above, it should be confined to a local region where local smoothing holds, e.g.~where the initial data is sub-critical. The following theorem provides such an estimate.

\begin{theorem}[Estimation of non-uniqueness]\label{thrm.main}
Assume $u_0\in L^{3,\I}$ and is divergence free. Fix $x_0\in \R^3$. Assume that $u_0|_B\in L^p(B)$ where $B= B_2(x_0)$ and $p\in (3,\I]$. Let $u_1$ and $u_2$ be weak $L^{3,\I}$-solutions with data $u_0$. Then, there exists $T=T(p,u_0)>0$ so that, for every $\rho\in (0,1/2)$, $\sigma \in (0,3/2)$, and $t\in (0,T)$,
\[
 \| u_1-u_2\|_{L^\I (B_{\rho}(x_0))} (t)\lesssim_{\rho, p,\sigma,u_0} t^{\sigma}, 
\] 
where the dependence on $u_0$ is via the quantities $\|u_0\|_{L^p(B)}$ and $\|u_0\|_{L^{3,\I}}$.
\end{theorem}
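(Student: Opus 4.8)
The plan is to derive and iterate an exact identity for the difference $w:=u_1-u_2$, seeding a bootstrap whose improvement comes from local regularity and whose ceiling is fixed by a far-field estimate built from \eqref{ineq:newEstimate}. Since $u_1,u_2$ are mild with the \emph{same} data, subtracting the two Duhamel formulas and using bilinearity of $B$ gives
\[
w=-B(u_1,w)-B(w,u_2),
\]
with no forcing term precisely because the data cancels; this is the analytic expression of the fact that the flows agree at $t=0$, so that all separation is encoded in how $B$ propagates the (initially zero) difference. Equivalently, one may expand each $u_i$ in the universal Picard series $u_i=\sum_{k\le N}a_k+R_i$ whose iterates $a_k$ depend only on $u_0$ (hence are common to both solutions) and subtract; I will work with the difference identity, which amounts to the same short-time local expansion.

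First I would record the two inputs. Locally, the smoothing of Jia and \v Sver\' ak applies on $B_1(x_0)\times[0,T]$ for some $T=T(p,u_0)>0$: with $U$ the strong solution of the localized sub-critical data, $u_i-U\in C^\ga_{\mathrm{parabolic}}$ there, so each $u_i$ is locally bounded (by $\lec s^{-3/(2p)}$ near $s=0$), and since $w=(u_1-U)-(u_2-U)$ vanishes at $t=0$ on the ball, \eqref{ineq.JSseparation} already gives the weak base rate $\|w\|_{L^\I(B_1(x_0))}(t)\lec t^{\ga/2}=:t^{\alpha_0}$. Globally, \eqref{ineq.BSSdecay} gives $\|\td u_i\|_{L^\I_tL^2}\lec t^{1/4}$, hence $\|w\|_{L^\I_tL^2}\lec t^{1/4}$, and for $q\in(3/2,3)$, $r=2q/(2q-3)$, estimate \eqref{ineq:newEstimate} gives $\|w\|_{L^r(0,t;L^q)}\lec t^{1/2}$.

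The core step estimates the right-hand side of the identity on a small ball $B_\rho(x_0)$ by splitting each kernel $\nb_y O(x-y,t-s)$ of $B$ at a fixed radius $|y-x|=\delta$, using the pointwise Oseen bound $|\nb O(z,\tau)|\lec(|z|+\sqrt\tau)^{-4}$ from \cite{VAS}. For the far part ($|y-x|\ge\delta$) I write $u_i=e^{s\Delta}u_0+\td u_i$: the $\td u_i$ contribution is controlled by $\int_0^t\|\td u_i\|_{L^2}\|w\|_{L^2}\,ds\lec t^{3/2}$, while the $e^{s\Delta}u_0\in L^{3,\I}$ contribution is controlled, via H\"older against $|x-y|^{-4}\mathbf 1_{|y-x|\ge\delta}$ in Lorentz/Lebesgue spaces followed by H\"older in time, by $\int_0^t\|w\|_{L^q}\,ds\lec t^{3/2-1/r}$. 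Hence the far part obeys the ceiling $t^{3/2-1/r}$, which is exactly the source of the restriction $\sigma<3/2$: letting $q\downarrow 3/2$ sends $1/r\downarrow 0$, so $3/2-1/r\uparrow 3/2$ but never reaches it. For the near part ($|y-x|<\delta$) I use local boundedness of $u_i$ together with the current local rate $\|w(s)\|_{L^\I}\lec s^{\alpha_n}$ on a slightly larger ball; since $\int_{|y-x|<\delta}(|x-y|+\sqrt{t-s})^{-4}\,dy\lec(t-s)^{-1/2}$, a Beta-integral gives near part $\lec t^{\alpha_n+1/2-3/(2p)}$, a genuine gain because $p>3$.

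Assembling these on a nested family of balls shrinking from $B_1(x_0)$ to $B_\rho(x_0)$ yields $\|w\|_{L^\I}(t)\lec t^{\min(3/2-1/r,\ \alpha_n+1/2-3/(2p))}$ at each stage. The positive increment $1/2-3/(2p)$ drives $\alpha_n$ up to the ceiling $3/2-1/r$ in finitely many steps (the number depending on $\sigma,p,\ga$), after which the rate saturates; the constants depend on $u_0$ only through $\|u_0\|_{L^p(B)}$ and $\|u_0\|_{L^{3,\I}}$. Given $\sigma<3/2$, choosing $q$ close enough to $3/2$ makes $3/2-1/r>\sigma$, and taking $T\le 1$ gives $\|u_1-u_2\|_{L^\I(B_\rho(x_0))}(t)\lec t^{3/2-1/r}\lec t^\sigma$. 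I expect the main obstacle to be the far-field step: matching the non-local pressure interaction between the critical far field and the sub-critical core against the sharp Oseen gradient decay, and carrying out the H\"older bookkeeping in $L^{3,\I}$ so the rate genuinely approaches $3/2$. This, rather than the near-field bootstrap, is where \eqref{ineq:newEstimate} and the strictness $\sigma<3/2$ are forced.
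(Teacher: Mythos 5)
Your proposal is correct in outline but takes a genuinely different route from the paper. The paper first proves the local asymptotic expansion (Theorem \ref{thrm.main2}), comparing each solution $u_i$ to the common Picard iterates via the exact identity $u-P_{k+1}=B(u-P_k,u-P_k)+B(u-P_k,P_k)+B(P_k,u-P_k)$ and bootstrapping the local rate on nested balls; Theorem \ref{thrm.main} then follows in three lines by the triangle inequality through $P_{k_0}$. You instead run the same kind of bootstrap directly on $w=u_1-u_2$ using $w=-B(u_1,w)-B(w,u_2)$, with the locally bounded coefficients $u_i$ (from Jia--\v Sver\'ak local smoothing, $\|u_i\|_{L^\I(B_{1/2})}(s)\lec s^{-3/(2p)}$) playing the role that $\|P_k\|_{L^\I(0,\I;L^p(B_k))}<\I$ (Lemma \ref{lemma:Pkq}) plays in the paper, and with $w=\td u_1-\td u_2$ inheriting \eqref{ineq.BSSdecay} and \eqref{ineq:newEstimate} directly, so that only the $k=0$ case of Lemmas \ref{lemma.decay}--\ref{lemma.decay2} is needed rather than the full Corollary \ref{cor:decay} for all $k$. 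The quantitative structure is identical: a near-field gain of $t^{1/2-3/(2p)}$ per step from the Beta integral, and a far-field ceiling of $t^{1/2+3/(2q)}=t^{3/2-1/r}$ from pairing the Oseen decay $(|x-y|+\sqrt{t-s})^{-4}$ against $L^{3,\I}\times L^rL^q$, which is exactly where $\sigma<3/2$ is forced in both arguments. What your route buys is economy for the non-uniqueness estimate alone (no Picard-iterate machinery, no Kato-class induction, no Lemma \ref{lemma:Pkq}); what it gives up is the expansion of Theorem \ref{thrm.main2} itself, which is a standalone result of the paper. Two small points to tighten: the base rate $t^{\ga/2}$ from Theorem \ref{theorem:JSlocalsmoothing} holds on $\overline B_{1/2}(x_0)$, not $B_1(x_0)$, so your nested balls should start there; and in the far-field H\"older step the kernel $|x-\cdot|^{-4}(1-\chi_{B_\de(x)})$ must be placed in the Lorentz space $L^{q',1}$ (not merely $L^{q'}$) so that O'Neil's inequality with the $L^{3,\I}$ factor lands the product in $L^1$ --- this is true for this kernel but needs to be said, and it is precisely the bookkeeping the paper carries out in the proof of Lemma \ref{lemma.decay2} and the estimate of $K_2$.
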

Insofar as non-uniqueness in the Leray class is concerned, if $u_0\in L^2\cap L^{3,\I}$, then any weak $L^{3,\I}$-solution is also a Leray weak solution as discussed in \cite{BaSeSv}. Hence our result applies to a subset of the Leray class.

Theorem \ref{thrm.main} is a corollary of the following theorem, the proof of which constitutes the bulk of this paper. 
Before stating the theorem, we recall the definition of Picard iterates. Let $P_0 = P_0 (u_0) = e^{t\Delta}u_0$ and define the $k^{\text{th}}$ Picard iterate to be $P_k = P_0 - B(P_{k-1},P_{k-1})$. Classically, the Picard iterates converge to a solution to \eqref{eq.NS} whenever \eqref{eq.NS} can be viewed as a perturbation of the heat equation. This is not the case for large $L^{3,\I}$ data, so we do not expect convergence of $P_k$ to $u$ when $u$ is a weak $L^{3,\I}$-solution. Nonetheless, the Picard iterates do capture some asymptotics at $t=0$ of weak $L^{3,\I}$-solutions, which is the point of the following theorem. 
%By the uniqueness of Picard iterates, we can find the local separation rate of two, possibly unique solutions to \eqref{eq.NS} by finding a bound of $\|u-P_k\|_{L^\I}$, as follows. 

\begin{theorem}[Local asymptotic expansion]\label{thrm.main2}
Assume $u_0\in L^{3,\I}$ and is divergence free. Fix $x_0\in \R^3$ and $p\in (3,\I]$. Assume further that $u_0|_B\in L^p(B)$ where $B=B_2(x_0)$. Then, there exists $\ga =\ga (p)\in(0,1)$ and $T=T(p,\|u_0\|_{L^{3,\I}},\|u_0\|_{L^p(B)})>0$ so that for any $\rho\in (0,1/2)$, $\sigma \in (0,3/2)$, $t\in (0,T)$ and $k=0,1,\ldots, k_0$,
\[
 \| u-P_k\|_{L^\I (B_\rho(x_0))} (t)\lesssim_{\rho, p,u_0,\sigma,k} t^{a_k}, 
\] 
where $a_0 = \min\{\ga/2, 1/2-3/(2p)\}$,
$a_{k+1} =\min\left\{\sigma, k( 1/2 -3/(2p))+a_0\right\}$ and 
 $k_0$ is the smallest natural number so that 
\[
k_0\bigg(\frac 1 2 -\frac 3 {2p}\bigg) +a_0 \geq \sigma.
\]
In particular, $a_{k_0}=\sigma$ and $a_k>a_{k-1}$ for $k=1,\ldots,k_0$. It follows that, for $(x,t)\in B_{\rho}(x_0) \times (0,T)$, and letting $a_{-1}= -3/(2p)$, we have
\[
u(x,t)= P_0 + \sum_{k=0}^{k_0-1}\mathcal O (t^{a_k}) + \mathcal O(t^{\sigma}) = \sum_{k=-1}^{k_0}\mathcal O (t^{a_k}),
\]
where the $\mathcal O (t^{a_k})$ terms are exactly solvable for $-1\leq k<k_0$. 
\end{theorem}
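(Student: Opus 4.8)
The plan is to establish the pointwise-in-time bounds $\|u - P_k\|_{L^\infty(B_\rho(x_0))}(t) \lesssim t^{a_k}$ by induction on $k$, and then read off the asymptotic expansion by telescoping $u = P_0 + \sum_{k=0}^{k_0-1}(P_{k+1}-P_k) + (u - P_{k_0})$, each Picard difference $P_{k+1}-P_k = B(P_{k-1},P_{k-1}) - B(P_k,P_k)$ being exactly computable from $u_0$. A recurring ingredient I would first isolate is the local subcritical bound $\|u\|_{L^\infty(B')}(s),\ \|P_k\|_{L^\infty(B')}(s)\lesssim s^{-3/(2p)}$ for balls $B'\subset B$: for $P_0 = e^{s\Delta}u_0$ this is the heat-flow estimate for $L^p(B)$ data (the contribution of $u_0$ outside $B$ is harmless near $x_0$, its source lying at distance $\geq 3/2$), and it propagates to each $P_k$ and to $u$ through the bilinear term since the gained power $1/2 - 3/p$ is never worse than $-3/(2p)$ for $p>3$.

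For the base case I would invoke the local smoothing theory of \cite{JS}. Letting $U$ be the strong solution with initial data a divergence-free localization $u_0^{\loc}$ of $u_0$ to $B$, that theory gives $u - U \in C^\gamma_{\text{parabolic}}(B'\times[0,T])$ with $(u-U)|_{t=0}=0$ on $B'$, hence $\|u-U\|_{L^\infty(B')}(t)\lesssim t^{\gamma/2}$. It then remains to compare $U$ with $P_0$. Here $U - e^{t\Delta}u_0^{\loc} = -B(U,U)$, while $e^{t\Delta}(u_0^{\loc}-u_0)$ is negligible near $x_0$; the subcritical data controls $B(U,U)$ in an $L^p$-based norm by $t^{1/2-3/(2p)}$, which the interior parabolic regularity underlying local smoothing upgrades to the same rate in $L^\infty(B')$ without degrading the time exponent. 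Taking the worse of the two rates yields $a_0 = \min\{\gamma/2,\ 1/2-3/(2p)\}$.

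For the inductive step I would use the mild formula $u = P_0 - B(u,u)$ together with $P_{k+1} = P_0 - B(P_k,P_k)$ to write, by bilinearity,
\[
u - P_{k+1} = -B(u - P_k,\, P_k) - B(u,\, u - P_k),
\]
so that each term carries the inductively small factor $u - P_k \lesssim s^{a_k}$. The essential difficulty is that $B$ is nonlocal, so I would split the spatial integral defining $B$, using the pointwise kernel bound $\lesssim (|x-y|+\sqrt{t-s})^{-4}$ of $e^{(t-s)\Delta}\mathbb P\nabla$, into a near region (inside a ball about $x_0$ where both the inductive bound and the subcritical bound $s^{-3/(2p)}$ hold) and a far region. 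On the near region the spatial kernel integrates to $(t-s)^{-1/2}$, and $\int_0^t (t-s)^{-1/2} s^{a_k}\, s^{-3/(2p)}\,ds \sim t^{a_k + 1/2 - 3/(2p)}$ supplies the gain of $1/2-3/(2p)$ at each step. On the far region $|x-y|\geq 1/2$, so I would exploit the $|y|^{-4}$ decay of the kernel to integrate the merely critical companion factor ($u$ or $P_k$, in $L^{3,\infty}$) against the supercritically-decaying $u-P_k$, controlled globally through $\td u$ via \eqref{ineq.BSSdecay} and \eqref{ineq:newEstimate} together with global bounds on the Picard corrections; a Hölder-in-time argument using \eqref{ineq:newEstimate} produces a far-field rate approaching $t^{1/2 + 3/(2q)}$, i.e. just below $t^{3/2}$ as $q\downarrow 3/2$. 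Combining the two regions improves the exponent by $1/2-3/(2p)$ per step (capped by $\sigma$), which unwinds to the stated $a_k$; since this increment is strictly positive, the process saturates at $\sigma$ after the finite number of steps $k_0$, and the far-field ceiling of $t^{3/2}$ is exactly what forces $\sigma < 3/2$.

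The hard part will be the interface bookkeeping that makes these clean estimates rigorous. First, the inductive hypothesis is only available on a ball about $x_0$, whereas the near region of the bilinear integral must sit inside it; I would therefore run the induction on a nested, slightly shrinking sequence of radii, which is affordable because only $k_0$ steps occur. Second, the far-region estimate pairs a critical factor in $L^{3,\infty}$ against a supercritical factor, so the pairing must be carried out in Lorentz spaces with the kernel's $|y|^{-4}$ decay supplying integrability, and the time integration must extract the full power from \eqref{ineq:newEstimate} without loss. Keeping all implied constants dependent only on $\|u_0\|_{L^p(B)}$ and $\|u_0\|_{L^{3,\infty}}$ throughout these splittings, while propagating the local subcritical bound $s^{-3/(2p)}$ to every Picard iterate, is the principal obstacle.
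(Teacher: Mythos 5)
Your proposal follows essentially the same route as the paper: the base case is Jia--\v Sver\'ak local smoothing plus a comparison of the localized strong solution $U$ with $P_0$; the inductive step splits the bilinear terms into near and far regions over a nested family of shrinking balls, gaining $1/2-3/(2p)$ per step in the near field and hitting the far-field ceiling $t^{3/(2q)+1/2}\to t^{3/2}$ via the new a priori bound \eqref{ineq:newEstimate}; the expansion then follows by telescoping. One step, however, would fail as written. In the far field you propose to pair the kernel's $|y|^{-4}$ decay against ``the merely critical companion factor ($u$ or $P_k$, in $L^{3,\I}$),'' but a weak $L^{3,\I}$-solution is \emph{not} known to remain in $L^\I_t L^{3,\I}$ for positive times: only $u-P_0$ is controlled, and only in $L^\I_t L^2$ (and $L^2\not\subset L^{3,\I}$), so your two-term decomposition $u-P_{k+1}=-B(u-P_k,P_k)-B(u,u-P_k)$ leaves a companion factor $u$ with no usable global critical bound. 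The repair is to expand $B(u,u-P_k)=B(P_k,u-P_k)+B(u-P_k,u-P_k)$, which is exactly the paper's three-term splitting \eqref{eq:splitting}: the two cross terms keep $P_k$ (which genuinely lies in $L^\I_t L^{3,\I}\cap\mathcal K_q$) as the companion and are handled via \cref{cor:decay}, while the purely quadratic far-field piece $B\big((u-P_k)(1-\chi_{B_k}),u-P_k\big)$ must be estimated separately, using the $L^2$ energy decay \eqref{ineq.BSSdecay} for $u-P_k$, which gives the clean rate $t^{3/2}$ there. With that adjustment (and noting that for the near-field cross term the paper uses $\sup_t\|P_k\|_{L^p(B_k)}<\I$ from \cref{lemma:Pkq} rather than your local $t^{-3/(2p)}$ bound on $\|P_k\|_{L^\I}$ --- either yields the same exponent), your argument coincides with the paper's.
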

 
Short-time asympototic expansions have been examined by Brandolese for small self-similar flows \cite{Brandolese} and by Brandolese and Vigneron for both small (in which case the expansion holds for all times) and large (in which case the data is globally sub-critical and the expansion is up to a finite time) non-self-similar flows \cite{BV}. A follow-up paper by Bae and Brandolese considers the {forced} Navier-Stokes \cite{BaeBrandolese}. In \cite{KR}, Kukavica and Ries give an expansion in arbitrarily many terms assuming the solution is smooth. In all of the preceding papers, either the initial data is strong enough to generate smooth solutions (e.g.~it is in a sub-critical class or is small in a critical class) or the solution is assumed to be smooth. Additionally, the terms of the asymptotic expansions depend on $u$.

The novelty of Theorem \ref{thrm.main2} is that it establishes time asymptotics without any scaling assumption (cf. \cite{BP1}) or requirements implying global regularity on the relevant  time domain (cf. \cite{BaeBrandolese,Brandolese,BV,KR}). The asymptotics depend only on $u_0$---they are independent of $u$ which is necessary for Theorem \ref{thrm.main}. Because ours is a spatially local expansion, spatial asymptotics are not relevant.

Long-time asymptotic expansions have also been studied extensively; see, for example, \cite{FS,FS2,GW,HM}, the review article \cite{BrandoleseSchonbek}, and the references therein. The spatial asymptotics for the stationary problem have also been studied; see, for example, \cite{KoSv} and the references therein. 

With Theorem \ref{thrm.main2} in hand, we quickly prove Theorem \ref{thrm.main}.
\begin{proof}[Proof of \cref{thrm.main}] Suppose $u_1$ and $u_2$ are weak $L^{3,\I}$-solutions with data $u_0$.
By \cref{thrm.main2}, we have for $i=1,2$ that
\[\|u_i-P_{k_0}\|_{L^\I(B_{\rho}(x_0))} (t) \lesssim_{\rho, p,\sigma,u_0} t^\sigma,\]
for all $0<t<T$. By the uniqueness of Picard iterates, we infer
\[
 \|u_1-u_2\|_{L^\I(B_{\rho}(x_0))}(t) \leq \|u_1-P_{k_0}\|_{L^\I(B_{\rho}(x_0))}(t)+\|u_2-P_{k_0}\|_{L^\I(B_{\rho}(x_0))}(t)\lesssim_{\rho, p,\sigma,u_0} t^{\sigma},
\]
for all $0<t<T$.
\end{proof}
\medskip 
\noindent \textbf{Discussion of the proof:} 
By local smoothing \cite{JS}, it is not difficult to show that
\[
 \| u-P_0\|_{L^\I(B_{\frac 1 2}(x_0))}(t)\lesssim t^{\frac \ga 2},
\]
for some $\ga=\ga(p) \in (0,1)$ and across some time interval. Our main insight is that this bound improves when $P_0$ is replaced by higher Picard iterates, a consequence of the self-improvement property of Picard iterates which has been used elsewhere, e.g.~\cite{AB,Brandolese,GIP}. To see how this works, we note that 
\EQ{\label{eq:splitting}
 u-P_{k+1} %=B(u,u)-B(P_{k},P_{k}) 
 = B(u-P_{k},u-P_{k}) + B(u-P_{k}, P_{k})+B(P_k, u-P_{k}).
}
Each term on the right hand side locally has an algebraic decay rate at $t=0$. The product structure and the time integral in the bilinear operator $B(f,g)$ lead to an improved algebraic decay rate for the left hand side compared to that for $u-P_k$. This improvement is only local. The far-field contributions to the flow are managed using a new \textit{a priori} bound for weak $L^{3,\I}$-solutions---see Corollary \ref{cor:decay}. The properties of weak $L^{3,\I}$-solutions \cite{AB,BaSeSv} are used critically throughout.

\bigskip\noindent \textbf{Organization:} In Section 2, we establish several key lemmas, most importantly the extension of the decay property \eqref{ineq.BSSdecay} to \eqref{ineq:newEstimate}. We also establish some elementary properties of Picard iterates. Section 3 contains the proof of Theorem \ref{thrm.main2}.
 
\section{Preliminaries}

In this section, we prove new \textit{a priori} bounds for weak $L^{3,\I}$-solutions. See Lemmas \ref{lemma.decay} and \ref{lemma.decay2}. We then establish a property of Picard iterates in Lemma \ref{lemma:Pkq}.

%Assume $u_0\in L^{3,\I}\cap L^2$. Presumably this means $u_0\in L^{\bar q}$ for $\bar q \in (2,3)$. 

Due to scaling considerations, one predicts that if the energy-level quantities on the left hand side of \eqref{ineq.BSSdecay} are replaced by lower Lebesgue or Lorentz norms, then the exponent on the right hand side will increase to preserve the scaling of the inequality. With our application in mind, it is natural to ask if the following dimensionless estimate holds
\[
 \sup_{0<s<t}\| \td u \|_{L^{\frac 3 2,1}} (s) \lesssim t^{\frac 1 2}.
\]
This estimate is motivated by the $L^{ 3/2,1}$-$L^{3,\I}$ duality pairing.
There are barriers to establishing the above decay rate, but nearby rates are within reach as the next two lemmas show. In the lemmas, we replace $\td u$ with the first three terms in the following expansion 
\[
 \td u = B(u,u) = B(\td u,\td u) + B(\td u,P_0)+B(P_0,\td u).
\]
The proof of the lemma also illustrates the barriers to getting the above estimate in $L^{3/2,1}$. Importantly, this failure is one reason we do not achieve the separation rate $t^{3/2}$ in our main theorem. Once we move away from the exponent $3/2$, it is sufficient to consider Lebesgue norms instead of Lorentz norms. Finally, as we will use these lemmas for an iterative procedure involving higher Picard iterates, our lemmas apply to any difference $u- P_k$, not just $\td u = u-P_0$.

 \begin{lemma}\label{lemma.decay} Fix $q\in (3/2 ,3)$, $T>0$ and $k\in \N_0$. Assume $u_0\in L^{3,\I}$ and is divergence free. Let $u$ be a weak $L^{3,\I}$-solution with initial data $u_0$. Then, letting $r =\frac {2q} {2q-3}$,

\[
 \| B(u-P_k,u-P_k) \|_{L^r(0,T;L^{q,1})} \lesssim_{k,q,u_0} T^{\frac 1 2}.
\]
\end{lemma}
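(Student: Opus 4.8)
The plan is to estimate the bilinear operator $B(f,g)$ with $f=g=u-P_k$ using the standard Oseen kernel bound, then reduce everything to the energy-type decay \eqref{ineq.BSSdecay} via the splitting $u-P_k = \td u - (P_k - P_0)$. The kernel of $B$, namely $e^{(t-s)\Delta}\mathbb P\nb\cdot$, acts as an operator with a homogeneous Oseen-type bound; applied to a product $f\otimes g$ it gains derivatives at the cost of negative powers of $t-s$. Concretely, I would use the heat-semigroup smoothing estimate $\|e^{(t-s)\Delta}\mathbb P\nb\cdot F\|_{L^{q,1}} \lesssim (t-s)^{-1/2 - \frac{3}{2}(\frac{1}{m}-\frac{1}{q})}\|F\|_{L^{m}}$ for suitable Lorentz/Lebesgue exponents, with $F = (u-P_k)\otimes(u-P_k)$. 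Choosing the inner exponent so that $\|F\|_{L^m}\lesssim \|u-P_k\|_{L^2}^2$ (i.e.\ $m$ related to $q$ and the $L^2$ control of the difference) is the natural route, since the only robust \emph{a priori} bound available is the $L^2$ decay of $\td u$.

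First I would record that $\|u-P_k\|_{L^2}(s) \lesssim_{k,u_0} s^{1/4}$ for each fixed $k$: this follows from \eqref{ineq.BSSdecay} for $\td u = u-P_0$, and for the difference $P_0 - P_k = B(P_{k-1},P_{k-1})$ one uses that the Picard iterates are controlled in the relevant spaces (the differences between consecutive Picard iterates decay at least as fast as $s^{1/4}$, and in fact faster, so they do not degrade the bound). Second, with $F = (u-P_k)\otimes(u-P_k)$ and $\|F\|_{L^{3/2}}\lesssim \|u-P_k\|_{L^3}^2$ unavailable (no $L^3$ control), I would instead interpolate to land in $L^m$ for an $m$ slightly below $3/2$, using $\|F\|_{L^m}\lesssim \|u-P_k\|_{L^{2m}}^2$ and controlling $L^{2m}$ by interpolating between the $L^2$ bound and the $\nb\td u \in L^2_t L^2_x$ bound (hence $L^2_t L^6_x$ by Sobolev). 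Third, I would substitute these into
\[
\|B(u-P_k,u-P_k)\|_{L^{q,1}}(t) \lesssim \int_0^t (t-s)^{-\frac{1}{2}-\frac{3}{2}(\frac{1}{m}-\frac{1}{q})} \|F\|_{L^m}(s)\,ds,
\]
and then take the $L^r$ norm in time over $(0,T)$, tuning the exponents so that the space-time integral closes with the stated power $T^{1/2}$ and the stated relation $r = \frac{2q}{2q-3}$. The exponent $r$ is precisely the Hölder-dual/Hardy--Littlewood--Sobolev balance point that makes the time integral scale like $T^{1/2}$, which is why the claimed power is dimensionally forced.

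The key computation is a Hardy--Littlewood--Sobolev (or Young's inequality in time) estimate: the convolution in $s$ of the singular kernel $(t-s)^{-\al}$ with the factor $\|F\|_{L^m}(s) \sim s^{1/2}$ (from squaring the $s^{1/4}$ bound, modulo the time-integrability coming from $\nb\td u\in L^2_t$) must be shown to lie in $L^r_t$ with norm $\lesssim T^{1/2}$. I expect the main obstacle to be bookkeeping the exact Lebesgue/Lorentz exponents so that (a) the heat-kernel smoothing exponent is integrable near $s=t$ (requiring $\frac12 + \frac32(\frac1m-\frac1q) < 1$, i.e.\ staying strictly away from the critical $q=3/2$ scaling), and (b) the outer $L^r$ time integrability matches $r=\frac{2q}{2q-3}$ without losing a logarithm. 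This is exactly the point flagged in the remark before the lemma: the Hardy--Littlewood--Sobolev step degenerates as $q\to 3/2$, which is what prevents reaching the endpoint $L^{3/2,1}$ and ultimately caps the separation rate below $t^{3/2}$. The Lorentz target $L^{q,1}$ (rather than $L^q$) is accommodated by using the Lorentz-space version of the smoothing estimate, which is available for $q>3/2$ away from the endpoint, so no genuine new difficulty arises there beyond using Lorentz interpolation in place of Lebesgue interpolation.
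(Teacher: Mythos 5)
Your plan is essentially the paper's proof: the paper applies Yamazaki's Lorentz-space estimate for $e^{(t-s)\Delta}\mathbb P\nabla\cdot$ (the clean choice $m=q$, so the kernel exponent is exactly $(t-s)^{-1/2}$ and your integrability constraint (a) is automatic), bounds $\|u-P_k\|_{L^{2q,2}}$ by the Lorentz Gagliardo--Nirenberg interpolation between the $s^{1/4}$ decay of $\|u-P_k\|_{L^2}$ (extended from $\td u$ to $u-P_k$ via \cite[Lemma 2.2]{AB}) and $\|\nabla(u-P_k)\|_{L^2}$, and closes with one-dimensional Hardy--Littlewood--Sobolev in time, which is exactly the step you identify as forcing $r=\tfrac{2q}{2q-3}$ and degenerating as $q\to 3/2^{+}$. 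The only refinement worth making is to take $m=q$ rather than ``slightly below $3/2$'' (the latter would violate your own condition (a) when $q$ is near $3$), but since you leave $m$ to be tuned this is bookkeeping, not a gap.
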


The above estimate is dimensionless.
By Lorentz space embeddings we trivially infer
\[
 \| B(u-P_k,u-P_k) \|_{L^r(0,T;L^{q,\beta})} \lesssim_{k,q,u_0} T^{\frac 1 2},
\]
for every $\beta \in (1,\I]$. This includes $L^r(0,T;L^q)$ when $\beta=q$. In our application, we will only use the $L^q$ version of this. However, the proof for the full scale of Lorentz spaces is no harder and we therefore include it in case it is useful elsewhere.

\begin{proof} 
By Yamazaki \cite{yamazaki},
\EQ{\label{ineq: Yamazaki}
 \| B(u-P_k,u-P_k) \|_{L^{q,1}}&\lesssim \int_0^t \frac 1 {(t-s)^\frac 1 2} \| (u-P_k)^2\|_{L^{q,1}}(s)\,ds
 \\&\lesssim \int_0^t \frac 1 {(t-s)^\frac 1 2} \| u-P_k\|_{L^{2q,2}}^2(s)\,ds.
}
 Recall the extension of the Gagliardo-Nirenberg inequality to the Lorentz scale \cite[Corollary 2.2]{DDN} which states
\EQ{\label{GN}
 \| f\|_{L^{\td p,\be}} \lesssim_{\td p, \td q, \be} \|f\|_{L^{\td q,\I}}^{\th}\|\nb f\|_{L^{2}}^{1-\th},
}
for $\be>0$ and 
\[
 \frac 1 {\td p} = \frac{\th} {\td q}+(1-\th) \bigg(\frac 1 2 -\frac 1 3\bigg),
\]
where $1\leq \td q<\td p< \I$ and
$3/2 -3/{\td p} <1$.
Let $\td p = 2q$ and $\td q=2$. These satisfy the above conditions because $q<3$. Then, $\th$ is given by 
\[
 \frac 3 {2q} -\frac 1 2= \th,
\]
and, provided $1<q<3$, the other conditions above are met.
To summarize,
\[
 \|f\|_{L^{2q,2}} \lesssim_{q} \|f\|_{L^{2,\I}}^{\th}\|\nb f\|_{L^{2}}^{1-\th}\lesssim_{q} \|f\|_{L^2}^{\th}\|\nb f\|_{L^2}^{1-\th},
\]
where we used the continuous embedding $L^2 \subset L^{2,\I}$.
Returning to our main estimate, this gives
\EQ{\| B(u-P_k,u-P_k) \|_{L^{q,1}}(t) 
 &\lesssim_{q} \int_0^t \frac 1 {(t-s)^\frac 1 2}\|u-P_k\|_{L^2}^{2\th}\|\nb(u-P_k)\|_{L^2}^{2(1-\th)}\,ds 
 \\&\lesssim_{k,q,u_0} t^{\frac {\th} 2} \int_0^t \frac 1 {(t-s)^\frac 1 2} \|\nb( u-P_k)\|_{L^2}^{2(1-\th)}\,ds,
}
where we used the fact that \eqref{ineq.BSSdecay} applies also to $u-P_k$ as a consequence of \cite[Lemma 2.2]{AB},\footnote{We will use the fact several times and presently elaborate on how it follows from \cite[Lemma 2.2]{AB}. The bounds \cite[(2.36)-(2.39)]{AB} allow us to extend \eqref{ineq.BSSdecay} to $u-P_k$ for $k>0$. Note that $\| \nb (P_{k+1}-P_k)\|_{L^2(0,T;L^2)}\lesssim T^{ 1/4}$ is not mentioned in \cite[(2.39)]{AB} but, upon inspecting the proof, it also holds as a consequence of the energy estimate for the Stokes equation and the above listed bounds.
} in which case the suppressed constant accrues a dependence on $k$. 
Note that for $t\in (0,T)$,
\[
 \int_0^t \frac 1 {(t-s)^\frac 1 2} \|\nb( u-P_k)\|_{L^2}^{2(1-\th)} \,ds 
 \leq \int_{\R} \frac 1 {|t-s|^{\frac 1 2}} \|\nb (u-P_k)\|_{L^2}^{2(1-\th)}(s) \chi_{(0,T)}(s)\,ds,
\]
and the right hand side can be viewed as $I_{\frac 12}( \|\nb (u-P_k)\|_{L^2}^{2(1-\th)} \chi_{(0,T)})$ where $I_{\frac 12}$ is a Riesz potential in 1D.
The Hardy-Littlewood-Sobolev inequality states that 
\[
 \left\| I_{\frac 1 2} \|\nb (u-P_k)\|_{L^2}^{2(1-\th)} \chi_{(0,T)}\right\|_{L^r(\R)} 
 \lesssim_{r} \left\| \|\nb (u-P_k)\|_{L^2}^{2(1-\th)} \chi_{(0,T)}\right\|_{L^{\td r}(\R)},
\]
where 
\[
 \frac 1 r = \frac 1 {\td r}-\frac 1 2.
\]
%Hence
%\EQ{\bigg\| \int_{\R} \frac 1 {|t-s|^{\frac 1 2}} %\|\nb(u-P_k)\|_{L^2}^{2(1-\th)} \chi_{(0,T)}\,ds \bigg\|_{L^r(\R)} 
% &\lesssim \big\| \|\nb (u-P_k)\|_{L^2}^{2(1-\th)} \chi_{(0,T)} \big\|_{L^{\td r}}.
%}
The selection
\[
 \td r = \frac 1 {1-\th}; \qquad r=\frac 2 {1-2\th},
\]
is valid for the Hardy-Littlewood-Sobolev inequality provided $3/2 <q$.\footnote{If $q=3/2$, then $\th = 1/2$ and $r=\I$, which is not permitted in the Hardy-Littlewood-Sobolev inequality. 
} Letting $r = \frac {2q}{2q-3}$ and putting the above observations together leads to 
\EQ{\label{ineq:lemresult}
 \| B(u-P_k,u-P_k)\|_{L^r (0,T;L^{q,1})} 
 % &\lesssim \bigg\| t^{\frac { \th} 2} \int_0^t \frac 1 {(t-s)^\frac 1 2} \|\nb (u-P_k)\|_{L^2}^{2(1-\th)} \,ds \bigg\|_{L^r((0,T)}
 % \\&\lesssim T^\frac {\th}2 \bigg\| \int_{\R} \frac 1 {|t-s|^{\frac 1 2}} \|\nb (u-P_k)\|_{L^2}^{2(1-\th)} \chi_{(0,T)}\,ds \bigg\|_{L^r(\R)}
 &\lesssim_{k,q,u_0} T^\frac {\th}2\big\| \|\nb (u-P_k)\|_{L^2}^{2(1-\th)} \chi_{(0,T)} \big\|_{L^{\td r}(\R)}
 \\&\lesssim_{k,q,u_0} T^\frac {\th}2 \| \nb (u-P_k)\|_{L^2(0,T;L^2)}^{\frac 2 {\td r}}
 \\&\lesssim_{k,q,u_0} T^\frac {\th}2 T^{\frac 1 {2\td r}}
 =T^{\frac 1 2},
}
where we used the extension of \eqref{ineq.BSSdecay} to $u-P_k$ again.
\end{proof}

We prove a similar result for $B(P_k,u-P_k)$. This requires the well known fact that if $u_0\in L^{3,\I}$, then $P_k$ is in the scaling invariant Kato classes for $q\in (3,\I]$, i.e.,
\EQ{\label{KatoClassInclusion}
\| P_k\|_{\mathcal K_q} :=\sup_{0<t<\I} t^{\frac 1 2-\frac 3{2q}} \|P_k\|_q(t) \lesssim_{k,u_0} 1.
}
To check this, note that the above property is immediate for $P_0$ by the embedding $L^{3,\I}\subset \Bp$, for $3<p\leq \I$, and the fact that $\| u_0\|_{\Bp} \sim \|P_0 \|_{\mathcal K_p}$ \cite{BCD}. Then, by the standard bilinear estimate (see the original papers \cite{FJR,Kato} or \cite[Ch.~5]{Tsai-book}),
\EQ{\label{mild.est}
\| B(f,g)\|_{L^p}(t) \lesssim \int_0^t\frac 1{(t-s)^{\frac 1 2}s^{ \frac 3 2 (\frac 1q-\frac 1 p)}} \| f\otimes g\|_{L^q}(s) \,ds,
}
we have 
\EQ{
\| B(P_{k},P_k)\|_{L^\I}(t) &\lesssim 
\int_0^t \frac 1 {(t-s)^{\frac 1 2+\frac 3 {2q}}} \| P_{k-1}\|_{L^q}(s) \|P_{k-1}\|_{L^\I}(s)\,ds
\\&\lesssim \int_0^t \frac 1 {(t-s)^{\frac 1 2+\frac 3 {2q}} s^{1-\frac 3 {2q}}} \|P_{k-1}\|_{\mathcal K_q} \|P_{k-1}\|_{\mathcal K_\I} \,ds
\\&\lesssim t^{-\frac 1 2} \|P_{k-1}\|_{\mathcal K_q} \|P_{k-1}\|_{\mathcal K_\I},
}
and 
\EQ{
\| B(P_{k},P_k)\|_{L^q}(t) &\lesssim 
\int_0^t \frac 1 {(t-s)^{\frac 1 2 }} \| P_{k-1}\|_{L^q}(s) \|P_{k-1}\|_{L^\I}(s)\,ds
\\&\lesssim \int_0^t \frac 1 {(t-s)^{\frac 1 2 } s^{1-\frac 3 {2q}}} \|P_{k-1}\|_{\mathcal K_q} \|P_{k-1}\|_{\mathcal K_\I} \,ds
\\&\lesssim t^{-\frac 1 2+\frac 3 {2q}} \|P_{k-1}\|_{\mathcal K_q} \|P_{k-1}\|_{\mathcal K_\I}.
}
The claim \eqref{KatoClassInclusion} follows from the above observations by induction.

\begin{lemma}\label{lemma.decay2} Fix $q\in (3/2 ,3)$, $T>0$ and $k\in \N_0$. Assume $u_0\in L^{3,\I}$ and is divergence free. Let $u$ be a weak $L^{3,\I}$-solution with initial data $u_0$. Then, letting $r =\frac {2q} {2q-3}$, 
\[
 \| B(P_k,u-P_k) \|_{L^r(0,T;L^{q,1})}+ \| B(u-P_k,P_k) \|_{L^r(0,T;L^{q,1})} \lesssim_{k,q,u_0} T^{\frac 1 2}.
\]

\end{lemma}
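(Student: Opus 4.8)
The strategy mirrors the proof of \cref{lemma.decay}, with the key difference that one factor is now a Picard iterate $P_k$ controlled in the Kato classes $\mathcal K_q$ rather than in energy norms. I will treat both bilinear terms symmetrically, since the bilinear estimate \eqref{mild.est} is symmetric in its arguments up to the assignment of which factor receives the high integrability exponent. Starting from \eqref{ineq: Yamazaki} applied to the product $P_k \otimes (u-P_k)$, I would bound
\[
 \| B(P_k, u-P_k)\|_{L^{q,1}}(t) \lesssim \int_0^t \frac{1}{(t-s)^{\frac 1 2}} \| P_k\, (u-P_k)\|_{L^{q,1}}(s)\,ds,
\]
and then apply Hölder in the Lorentz scale to split the product. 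The natural pairing is to place $P_k$ in a critical Lorentz norm and $u-P_k$ in the energy-adapted norm $L^{2q,2}$ exactly as in \cref{lemma.decay}: writing $\frac 1 q = \frac{1}{p_1} + \frac{1}{2q}$ and using $\| P_k\|_{L^{p_1,\I}} \lesssim t^{-(\frac 1 2 - \frac{3}{2p_1})}$ from \eqref{KatoClassInclusion}, together with the Gagliardo--Nirenberg bound $\|u-P_k\|_{L^{2q,2}} \lesssim \|u-P_k\|_{L^2}^\th \|\nb(u-P_k)\|_{L^2}^{1-\th}$ where $\th = \frac{3}{2q} - \frac 1 2$.

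From here the integrand carries a factor $s^{-(\frac 1 2 - \frac{3}{2p_1})}\|u-P_k\|_{L^2}^{\th}\|\nb(u-P_k)\|_{L^2}^{1-\th}$, so after invoking \eqref{ineq.BSSdecay} for $u-P_k$ (via \cite[Lemma 2.2]{AB}) to replace $\|u-P_k\|_{L^2}^{\th} \lesssim s^{\th/4}$, I reduce to a one-dimensional convolution of the remaining time weight against $\|\nb(u-P_k)\|_{L^2}^{1-\th}\chi_{(0,T)}$. The plan is then to take the $L^r(0,T)$ norm in $t$ and apply Hardy--Littlewood--Sobolev, choosing the exponents so that the final time power lands on $T^{1/2}$. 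The precise bookkeeping requires checking that the total time-homogeneity is dimensionless (which it must be, since every quantity in sight is scaling invariant), and that the fractional integration exponents are admissible for the range $q \in (3/2,3)$; as in \cref{lemma.decay}, the endpoint $q = 3/2$ is excluded precisely because it forces $r = \infty$.

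The main obstacle I anticipate is managing the singularity of the Kato weight $s^{-(\frac 1 2 - \frac{3}{2p_1})}$ at $s=0$ in combination with the $(t-s)^{-1/2}$ kernel: one must verify this product is integrable and that after the Gagliardo--Nirenberg substitution the remaining exponents still satisfy the HLS constraint $\frac 1 r = \frac{1}{\td r} - \frac 1 2$ with $\td r = \frac{1}{1-\th}$. A clean way to handle this is to absorb the $s^{\th/4}$ gain and the Kato weight into the time convolution and split the Riesz-potential exponent accordingly, rather than treating $P_k$'s weight separately — essentially interpolating the choice of $p_1$ so the combined time power matches the scaling target. The symmetric term $B(u-P_k, P_k)$ follows identically since \eqref{ineq: Yamazaki} and \eqref{mild.est} are insensitive to the order of the tensor factors, so no separate argument is needed there.
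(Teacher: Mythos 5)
Your route diverges from the paper's at the very first step after Yamazaki, and the divergence is where the difficulty lies. The paper does \emph{not} keep the product $P_k\,(u-P_k)$ coupled: after O'Neil it applies Young's inequality to bound
\[
 \| P_k(u-P_k)\|_{L^{q,1}} \lesssim \|P_k\|_{L^{2q}}^2 + \| u-P_k\|_{L^{2q,1}}^2,
\]
which decouples the two factors completely. The $\|u-P_k\|_{L^{2q,1}}^2$ contribution is then handled verbatim by the argument of \cref{lemma.decay} (taking $\be=1$ in \eqref{GN}), and the $\|P_k\|_{L^{2q}}^2$ contribution is an elementary Kato-class computation: $\|P_k\|_{L^{2q}}^2\lesssim s^{-(1-\frac{3}{2q})}$, the convolution with $(t-s)^{-1/2}$ gives $t^{\frac{3}{2q}-\frac12}$, and the $L^r(0,T)$ norm lands exactly on $T^{1/2}$. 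No weighted convolution and no second use of Hardy--Littlewood--Sobolev is needed. Your plan instead uses H\"older to put $P_k$ in $L^{2q,\I}$ and $u-P_k$ in a Gagliardo--Nirenberg norm, which forces you to run HLS against an integrand carrying the singular Kato weight $s^{-(\frac12-\frac{3}{4q})}$. You correctly identify this as the main obstacle, but the resolution you sketch (``interpolating the choice of $p_1$'') is not a proof: changing $p_1$ only redistributes the singularity between the weight and the GN exponent, and scaling invariance guarantees the total power is $T^{1/2}$ \emph{only if} the relevant integrals converge, which is exactly what has to be checked. Your route can in fact be completed --- apply a three-factor H\"older in time to $\|s^{-(\frac12-\frac{3}{4q})+\frac{\th}{4}}\,g(s)^{1-\th}\|_{L^{\td r}(0,T)}$ before invoking HLS, placing the power weight in $L^{m}$ with $\frac1{\td r}=\frac1m+\frac{1-\th}{2}$; one checks $(\frac12-\frac{3}{4q}-\frac{\th}{4})m<1$ for all $q\in(3/2,3)$ --- but as written this step is missing, and it is the crux of the lemma.

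A second, smaller issue: your Lorentz--H\"older pairing $\|P_k\|_{L^{2q,\I}}\|u-P_k\|_{L^{2q,2}}$ does not land in $L^{q,1}$, since O'Neil requires the second indices to satisfy $1\le \frac1{s_1}+\frac1{s_2}$ and here $\frac1\I+\frac12=\frac12<1$; you only get $L^{q,2}$. You need $u-P_k\in L^{2q,1}$, which is available because \eqref{GN} holds with $\be=1$ (the paper points this out explicitly). Both of these repairs are routine, but the paper's Young's-inequality decoupling avoids them entirely and reduces the cross terms to material already proved.
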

\begin{proof} 
Although the bilinear operator $B$ is not symmetric, our estimates are identical for the two terms in the lemma. Hence we only consider $B(P_k,u-P_k)$.
By Yamazaki \cite{yamazaki} and O'Neil's inequality,
\EQ{\| B(P_k,u-P_k) \|_{L^{q,1}}&\lesssim \int_0^t \frac 1 {(t-s)^\frac 1 2} \| P_k(u-P_k)\|_{L^{q,1}}(s)\,ds
 \\&\lesssim \int_0^t \frac 1 {(t-s)^{\frac 1 2}} \bigg(\|P_k\|_{L^{2q,\I}}^2 + \| u-P_k\|_{L^{2q,1}}^2\bigg)\,ds
 \\&\lesssim \int_0^t \frac 1 {(t-s)^{\frac 1 2}} \bigg(\|P_k\|_{L^{2q}}^2 + \| u-P_k\|_{L^{2q,1}}^2\bigg)\,ds.
}
Noting that we may choose $\beta=1$ in the extension of the Gagliardo-Nirenberg inequality \eqref{GN} to the Lorentz scale, we have the desired result for the $u-P_k$ term by the work done between \eqref{ineq: Yamazaki} and \eqref{ineq:lemresult} in the proof of \cref{lemma.decay}. 

We then consider $P_k$ in $L^{2q}.$
By the membership of $P_k$ in the Kato class,
\EQ{\label{ineq:PkKq}\int_0^t \frac 1 {(t-s)^{\frac 12}} \|P_k\|_{L^{2q}}^2 \, ds
 &\lesssim \int_0^t \frac 1 {(t-s)^{\frac 1 2}s^{1-\frac 3 {2q}}} \|P_k\|_{\mathcal K_{2q}}^2\, ds \\
 &\lesssim_{k,q,u_0} t^{\frac 3 {2q}-\frac 1 2}.
}
Then, 
\[
 \| B(P_k,u-P_k) \|_{L^r(0,T;L^{q,1})} \lesssim_{k,q,u_0} \left(\int_0^T \left(t^{\frac 3 {2q}-\frac 1 2}\right)^r \,dt\right)^{\frac 1r} \lesssim_{k,q,u_0} T^{\frac 3 {2q}-\frac 1 2+\frac 1 r}\lesssim_{k,q,u_0} T^{\frac 1 2},
\]
where we used the definitions of $r$ and $\th$.
\end{proof}
Together, the above two lemmas lead to the following corollary.
 
\begin{corollary}\label{cor:decay}
Fix $q\in (3/2 ,3)$, $T>0$ and $k\in \N_0$. Assume $u_0\in L^{3,\I}$ and is divergence free. Let $u$ be a weak $L^{3,\I}$-solution with initial data $u_0$. Then, letting $r =\frac {2q} {2q-3}$, we have
\[
 \| u-P_k \|_{L^r(0,T;L^{q})} \lesssim_{k,q,u_0} T^{\frac 1 2}.
\]
\end{corollary}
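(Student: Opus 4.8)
The plan is to read the corollary off the two preceding lemmas by combining them with the algebraic splitting \eqref{eq:splitting} and a Lorentz-space embedding; no new estimate is required. First I would invoke \eqref{eq:splitting} with $k$ replaced by $k-1$, which for every $k\geq 1$ reads
\[
u - P_k = B(u-P_{k-1}, u-P_{k-1}) + B(u-P_{k-1}, P_{k-1}) + B(P_{k-1}, u-P_{k-1}).
\]
Taking the $L^r(0,T;L^{q,1})$ norm of both sides and applying the triangle inequality, I would bound the first term on the right by \cref{lemma.decay} and the second and third terms by \cref{lemma.decay2}, all at index $k-1$. Since the standing hypotheses $q\in(3/2,3)$ and $r=2q/(2q-3)$ are exactly those of the lemmas, each term is $\lesssim_{k,q,u_0} T^{1/2}$, giving $\|u-P_k\|_{L^r(0,T;L^{q,1})}\lesssim_{k,q,u_0} T^{1/2}$.

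To descend from $L^{q,1}$ to $L^q$, I would use the continuous embedding $L^{q,1}\hookrightarrow L^q$ (the spaces $L^{q,\beta}$ are nondecreasing in $\beta$ and $L^{q,q}=L^q$), already noted in the remark following \cref{lemma.decay}. This establishes $\|u-P_k\|_{L^r(0,T;L^q)}\lesssim_{k,q,u_0} T^{1/2}$ for all $k\geq 1$.

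It remains to treat the base case $k=0$, which the splitting does not reach directly, since \eqref{eq:splitting} produces $u-P_{k+1}$ from index-$k$ bilinear terms. Here I would write $u-P_0=(u-P_1)-B(P_0,P_0)$, using $P_1=P_0-B(P_0,P_0)$. The term $u-P_1$ is controlled by the case $k=1$ just established. For the remaining term, the Kato-class computation carried out just before \cref{lemma.decay2} yields $\|B(P_0,P_0)\|_{L^q}(t)\lesssim_{u_0} t^{-1/2+3/(2q)}$; since $q\in(3/2,3)$ the exponent lies in $(0,1/2)$, so raising to the power $r$ and integrating in time gives
\[
\|B(P_0,P_0)\|_{L^r(0,T;L^q)}\lesssim_{u_0} T^{-1/2+3/(2q)+1/r}=T^{1/2},
\]
where the last equality uses $1/r=1-3/(2q)$. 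Combining the two contributions finishes $k=0$.

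I do not expect a genuine obstacle: once the two lemmas are in hand the argument is bookkeeping. The only points requiring care are the index shift in \eqref{eq:splitting} (so that the lemmas are applied at index $k-1$ rather than $k$) and the separate, short treatment of $k=0$, which relies on the Kato-class bound for $B(P_0,P_0)$ rather than on the lemmas themselves.
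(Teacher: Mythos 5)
Your argument is correct, and for $k\geq 1$ it is exactly the paper's proof: the decomposition \eqref{eq:splitting} shifted to index $k-1$, \cref{lemma.decay} for the quadratic term, \cref{lemma.decay2} for the two cross terms, and the embedding $L^{q,1}\hookrightarrow L^q$. (The paper's displayed identity writes $P_k$ in the second slots where it should read $P_{k-1}$, as you have it; this does not affect the estimate.) Where you genuinely add something is the case $k=0$: the paper's one-line proof only invokes the identity ``for $k\geq 1$'' and leaves $k=0$ unaddressed, even though $u-P_0=-B(u,u)$ produces the extra term $B(P_0,P_0)$ that the two lemmas do not cover. Your fix --- either via $u-P_0=(u-P_1)-B(P_0,P_0)$ or by adding $B(P_0,P_0)$ directly to the expansion --- is the right one, and the bound $\|B(P_0,P_0)\|_{L^q}(t)\lesssim t^{3/(2q)-1/2}$ follows from $P_0\in\mathcal K_{2q}$ (note $2q>3$) by precisely the computation \eqref{ineq:PkKq}; raising to the power $r$ and integrating indeed returns $T^{1/2}$ since $1/r=1-3/(2q)$. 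So your write-up is not merely equivalent to the paper's: it supplies the missing base case, and the only caveat is that the relevant Kato class is $\mathcal K_{2q}$ rather than $\mathcal K_q$ (which would not be available for $q<3$), a point worth stating explicitly.
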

\begin{proof}
This is immediate given Lemmas \ref{lemma.decay} and \ref{lemma.decay2} and the fact that \[u-P_k = B(u-P_{k-1},u-P_{k-1})+B(P_k,u-P_{k-1})+B(u-P_{k-1},P_k),\]
for $k\geq 1$.
\end{proof}
\begin{comment}
\snt{DO NOT NEED THIS LEMMA}
\begin{lemma}\label{lemma.decay3} Fix $q\in (\frac 3 2 ,3)$, $T>0$ and $k\in \N_0$. Assume $u_0\in L^{3,\I}$ and is divergence free. Let $u$ be a weak $L^{3,\I}$-solution with initial data $u_0$. Then
\[
 \| B(P_k,P_k) \|_{L^r(0,T;L^{q})} \lesssim_{k,q,u_0} T^{\frac 1 2},
\]
where 
\[
r =\frac {2q} {2q-3}. 
 % r= \frac 2 {1-2\th} ;\qquad \th = \frac 3 {2q}-\frac 1 2.
\]
\end{lemma}

Note that $u-P_k = B(u-P_k,u-P_k)+B(u-P_k,u)+B(u,u-P_k) +B(P_k,P_k)$. \snt{LAST TERM SHOULDN'T BE THERE}
Unlike the other components of $u-P_k$, we do not apparently have the above estimate when $L^q$ is replaced by $L^{q,1}$, as is the case in the lemmas above, since $P_k$ does not need to be in $L^{2q,2}$, which is outside of the range of spatial norms of the Kato classes. This can be viewed as a marginally stronger estimate for $u-(P_k +B(P_k,P_k))$, than for $u-P_k$. 

\begin{proof}
We have 
\EQ{
\| B(P_k,P_k)\|_{L^q}&\lesssim \int_0^t \frac 1 {(t-s)^{\frac 1 2}} \| P_k\|_{2q}^2\,ds 
\\&\lesssim \int_0^t \frac 1 {(t-s)^{\frac 1 2} s^{1 -\frac 3 {2q}}}.
}
This is exactly \eqref{ineq:PkKq} and, using the remainder of the proof of \ref{lemma.decay2} we are done.
\end{proof}

\end{comment} 

Our next lemma is a technical statement about the decay at $t=0$ of the heat semigroup. 
\begin{lemma}\label{lem:heatL321} Let $B=B_{R}(x_0)$ and $B' := B_{r}(x_0)$ where $0<r<R<\I$. Then, for $0<t<\I$,
\EQ{\left\|\|e^{-\frac{|x-y|^2}{4t}}(1-\chi_{B})\|_{L^{\frac 3 2,1}_y}\right\|_{L^\I_x(B')}\lesssim_{R,r} e^{\frac{-(R-r)^2}{4t}}.}
\end{lemma}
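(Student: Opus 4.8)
The plan is to collapse the double norm to the Lorentz norm of a single truncated Gaussian, then extract the claimed exponential factor and control what remains. Write $d:=R-r>0$. For the geometric reduction, fix $x\in B'=B_r(x_0)$ and let $y\notin B=B_R(x_0)$; then $|x-x_0|<r$ and $|y-x_0|\ge R$, so the triangle inequality gives $|x-y|\ge |y-x_0|-|x-x_0|>R-r=d$. Hence $\{y\notin B\}\subset\{|x-y|\ge d\}$ and pointwise $(1-\chi_B)(y)\le \chi_{\{|x-y|\ge d\}}(y)$. Since a Lorentz norm is translation invariant, I would bound, uniformly in $x\in B'$,
\[
\big\|e^{-\frac{|x-y|^2}{4t}}(1-\chi_B)\big\|_{L^{3/2,1}_y}\le \big\|e^{-\frac{|z|^2}{4t}}\chi_{\{|z|\ge d\}}\big\|_{L^{3/2,1}_z}=:\|g_t\|_{L^{3/2,1}},
\]
and the right-hand side no longer depends on $x$, so the outer $L^\infty_x(B')$ norm is harmless and only $\|g_t\|_{L^{3/2,1}}$ remains to be estimated.

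For the main estimate I would avoid the decreasing-rearrangement formula (available but cumbersome here) in favor of real interpolation. Recall $L^{3/2,1}=(L^1,L^2)_{2/3,1}$, since $\tfrac{1}{3/2}=\tfrac{1-2/3}{1}+\tfrac{2/3}{2}$, together with the standard convexity bound $\|h\|_{(L^1,L^2)_{2/3,1}}\lesssim \|h\|_{L^1}^{1/3}\|h\|_{L^2}^{2/3}$, valid since $g_t\in L^1\cap L^2$. This reduces the problem to two classical Gaussian-tail integrals, out of each of which I would factor the exponential: writing $|z|^2=d^2+(|z|^2-d^2)$ on $\{|z|\ge d\}$,
\[
\|g_t\|_{L^1}=e^{-\frac{d^2}{4t}}\!\int_{|z|\ge d}\!e^{-\frac{|z|^2-d^2}{4t}}\,dz=:e^{-\frac{d^2}{4t}}I_1(t),\qquad \|g_t\|_{L^2}^2=e^{-\frac{d^2}{2t}}\!\int_{|z|\ge d}\!e^{-\frac{|z|^2-d^2}{2t}}\,dz=:e^{-\frac{d^2}{2t}}I_2(t).
\]
Consequently $\|g_t\|_{L^1}^{1/3}\|g_t\|_{L^2}^{2/3}=e^{-\frac{d^2}{12t}}e^{-\frac{d^2}{6t}}\,I_1(t)^{1/3}I_2(t)^{1/3}=e^{-\frac{d^2}{4t}}\,I_1(t)^{1/3}I_2(t)^{1/3}$, using $\tfrac1{12}+\tfrac16=\tfrac14$. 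This already produces exactly the exponent $\tfrac{(R-r)^2}{4t}$ claimed in the statement.

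It then remains to control the residual $I_1(t)^{1/3}I_2(t)^{1/3}$. A change of variables $z=\sqrt{t}\,w$ shows that each $I_j(t)$ is bounded by a constant depending only on $R,r$ times a fixed power of $t$, so the residual is $\lesssim_{R,r}(1+t)$; on the bounded time interval $(0,T)$ relevant to every application this is $\lesssim_{R,r}1$, yielding $\|g_t\|_{L^{3/2,1}}\lesssim_{R,r}e^{-\frac{(R-r)^2}{4t}}$. The one genuinely delicate point is the middle step: since Lorentz norms carry no elementary integral formula, $\|g_t\|_{L^{3/2,1}}$ cannot be estimated by direct integration and must be routed either through the distribution-function representation $\|h\|_{L^{3/2,1}}\simeq\int_0^\infty |\{\,|h|>\lambda\,\}|^{2/3}\,d\lambda$ or, as above, through interpolation. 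I would favor the interpolation route because it splits the task into two standard Gaussian integrals while automatically delivering the sharp constant $(R-r)^2/(4t)$ in the exponent, with the geometric lower bound $|x-y|\ge R-r$ supplying that constant.
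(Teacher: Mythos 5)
Your argument is correct and reaches the stated bound, but it routes through a genuinely different mechanism than the paper. Both proofs rest on the same geometric core — for $x\in B_r(x_0)$ and $y\notin B_R(x_0)$ one has $|x-y|\ge R-r$, and this single inequality is the source of the constant $(R-r)^2/(4t)$ in the exponent — but they diverge in how the $L^{3/2,1}$ norm is handled. The paper computes it directly from the layer-cake formula $\|f\|_{L^{3/2,1}}=\tfrac32\int_0^\infty \mu\{|f|\ge s\}^{2/3}\,ds$: the superlevel set is $B(x,\sqrt{-4t\ln s})\setminus B_R(x_0)$, which is empty unless $s\le e^{-(R-r)^2/(4t)}$, so the problem collapses to the elementary integral $\int_0^{\e}(-4t\ln s)\,ds$ with $\e=e^{-(R-r)^2/(4t)}$. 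You instead use monotonicity and translation invariance of the Lorentz quasinorm to reduce to a radial truncated Gaussian, then invoke $L^{3/2,1}=(L^1,L^2)_{2/3,1}$ together with the convexity inequality $\|h\|_{(L^1,L^2)_{2/3,1}}\lesssim\|h\|_{L^1}^{1/3}\|h\|_{L^2}^{2/3}$, reducing everything to two classical Gaussian tail integrals; the exponent bookkeeping $\tfrac1{12}+\tfrac16=\tfrac14$ then reassembles exactly the claimed rate. The paper's route is self-contained and needs nothing beyond the definition of the norm and one antiderivative; yours offloads the Lorentz-space work to standard interpolation facts and would generalize with no extra effort to any $L^{p,1}$ with $1<p<2$, at the cost of quoting the real-interpolation identification. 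One shared caveat: both arguments, as written, leave behind a polynomial-in-$t$ prefactor — your residual $I_1(t)^{1/3}I_2(t)^{1/3}$, and the paper's term $4t\,e^{-(R-r)^2/(4t)}$ in its final line — so strictly speaking each yields the stated inequality with constant depending only on $R,r$ only after restricting to a bounded time interval (or after inserting a harmless factor of $1+t$). You flag this explicitly, which is the honest reading; since every application of the lemma takes $t<T$, nothing is lost. Your passing remark that each $I_j(t)$ is controlled "by a change of variables" deserves one more line (a Gaussian tail estimate such as $\int_{|w|\ge a}e^{-|w|^2/4}\,dw\lesssim(1+a)e^{-a^2/4}$ is what actually kills the compensating factor $e^{d^2/(4t)}$ as $t\to0$), but this is routine and does not affect correctness.
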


\begin{proof}
{First, assume without loss of generality that $x_0=0$.} Then, letting $x\in B'$,
\EQ{\|e^{-\frac{|x-y|^2}{4t}}(1-\chi_{B})\|_{L^{\frac 3 2,1}_y}
 &= \frac 3 2 \int_0^\I \mu \left\{ y:e^{-\frac{|x-y|^2}{4t}}(1-\chi_{B}(y)) \ge s\right\}^{\frac 2 3}\,ds,
 }
 where $\mu$ is Lebesgue measure. 
{Note that the above set can be written as \[A(x)=\{y:|x-y|\le \sqrt{-4t\ln(s)},\, |y|>R\}=B(x,(-4t\ln(s))^{\frac 1 2}) \setminus B_R(0),\] which is well-defined because $t\ge 0$ and $s\le 1$. Then, \EQ{\left\|\|e^{-\frac{|x-y|^2}{4t}}(1-\chi_{B}(y))\|_{L^{\frac 3 2,1}_y}\right\|_{L^\I_x(B')}
 &\lesssim \left\|\int_0^\I 
 \mu (A(x))^{\frac 2 3}\, ds\right\|_{L^\I_x(B)}\\
 &\lesssim \int_0^{e^{\frac{-(R-r)^2}{4t}}} |-4t\ln(s)| \, ds\\
 &\lesssim 4t \left(e^{\frac{-(R-r)^2}{4t}}\frac{(R-r)^2}{4t}+e^{\frac{-(R-r)^2}{4t}}\right) \\
 &\lesssim_{R,r} e^{\frac{-(R-r)^2}{4t}}.}
 } 
\end{proof}

The above lemma leads to a local \textit{a priori} inclusion for Picard iterates.

\begin{lemma}\label{lemma:Pkq} Let $B=B_R(x_0)$ and $B' = B_r(x_0)$ where $0<r<R<\I$. Let $u_0\in L^{3,\I}$ with $u_0|_{B}\in L^{q}(B)$, for some $3< q\leq \I$. It follows that $P_k \in L^\I(0,\I; L^q(B'))$. 
\end{lemma}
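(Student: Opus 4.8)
The plan is to prove $P_k \in L^\infty(0,\infty; L^q(B'))$ by induction on $k$, using the recursion $P_k = P_0 - B(P_{k-1},P_{k-1})$ and treating the base case and the bilinear term separately. The key structural idea is that local $L^q$-control is \emph{preserved} up the Picard scale, where the global $L^{3,\infty}$-membership of the full iterates (via the Kato class inclusion \eqref{KatoClassInclusion}) handles the far field and the inductive local hypothesis handles the near field. The main obstacle is controlling $B(P_{k-1},P_{k-1})$ on $B'$ when $P_{k-1}$ is only subcritical \emph{inside} $B$, not globally: the bilinear operator is nonlocal, so field values far from $B'$ feed back through the heat kernel, and one must show this far-field contribution does not destroy the local $L^q$ bound.

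\textbf{Base case $k=0$.} I would first verify $P_0 = e^{t\Delta}u_0 \in L^\infty(0,\infty; L^q(B'))$. Split $u_0 = u_0 \chi_B + u_0(1-\chi_B)$. For the near part, $e^{t\Delta}(u_0\chi_B)$ is controlled in $L^q(\R^3)$ uniformly in $t$ by the contractivity of the heat semigroup on $L^q$ together with $u_0|_B \in L^q(B)$. For the far part $u_0(1-\chi_B)$, on the smaller ball $B'$ the convolution only sees the kernel restricted to $\{|x-y|\ge R-r\}$; writing the estimate through a Hölder/duality pairing against $u_0(1-\chi_B)\in L^{3,\infty}$ and invoking \cref{lem:heatL321} (with the $L^{3/2,1}$--$L^{3,\infty}$ duality), one obtains a bound by $\|u_0\|_{L^{3,\infty}}\, e^{-(R-r)^2/(4t)}$, which is uniformly bounded on $(0,\infty)$ and in fact decays as $t\to 0^+$. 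Hence $P_0 \in L^\infty(0,\infty;L^q(B'))$.

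\textbf{Inductive step.} Assume $P_{k-1}\in L^\infty(0,\infty;L^q(B''))$ for balls slightly larger than $B'$; it suffices to estimate $B(P_{k-1},P_{k-1})$ on $B'$. I would split each factor as near ($\chi_{B''}$) and far ($1-\chi_{B''}$) relative to an intermediate ball $B'\Subset B''\Subset B$, giving four terms. The purely-near term $B(P_{k-1}\chi_{B''},P_{k-1}\chi_{B''})$ is handled by the mild estimate \eqref{mild.est}: since $P_{k-1}\chi_{B''}$ is globally in $L^q$ with the subcritical exponent $q>3$ (inductive hypothesis), the time integral $\int_0^t (t-s)^{-1/2}s^{-\,\cdots}\,ds$ converges and yields a uniform-in-$t$ $L^q(B')$ bound; here one uses $\|P_{k-1}\chi_{B''}\|_{L^q}$ paired with $\|P_{k-1}\|_{\mathcal K_\infty}$, the latter supplying integrable time singularity via \eqref{KatoClassInclusion}. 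The three remaining terms each carry at least one far factor, so the spatial kernel of $B$ (the Oseen tensor, bounded by $(|x-y|+\sqrt{t-s})^{-4}$) is evaluated at $|x-y|\gtrsim R-r$ for $x\in B'$; this gives a smooth, bounded, exponentially harmless kernel against which the far factor, controlled globally in $L^{3,\infty}$ by \eqref{KatoClassInclusion}, is paired through Hölder/Hardy--Littlewood--Sobolev, producing a finite $L^q(B')$ bound uniform on $(0,\infty)$. Summing the four contributions and invoking the triangle inequality with the base case closes the induction.

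The delicate point, which I would treat carefully, is the interplay of the time integrability in the near-near term: because $q>3$ is strictly subcritical, the exponent $\frac32(\frac1q-\frac1p)$ appearing in \eqref{mild.est} stays below $1$ and the Kato-class time weight $s^{-(1-3/(2\cdot\infty))}$ combines with $(t-s)^{-1/2}$ to give a convergent Beta-type integral uniformly in $t\in(0,\infty)$; the strict inequality $q>3$ is exactly what is needed here, mirroring the role of subcriticality in local smoothing. The far terms are comparatively soft once one observes the kernel is bounded away from its singularity on $B'$, so the main work is bookkeeping the nested balls $B'\Subset B''\Subset B$ and confirming every constant depends only on $q$, the radii, $\|u_0\|_{L^{3,\infty}}$, and $\|u_0\|_{L^q(B)}$.
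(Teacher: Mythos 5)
Your overall strategy is the same as the paper's: induct on $k$ over a family of nested balls, prove the base case by splitting $u_0$ into $u_0\chi_B$ (heat semigroup boundedness on $L^q$) plus $u_0(1-\chi_B)$ (Gaussian decay via \cref{lem:heatL321} and the $L^{3/2,1}$--$L^{3,\I}$ pairing), and run the inductive step by a near/far splitting of $B(P_{k-1},P_{k-1})$ in which the near part combines the inductive local $L^q$ bound with the Kato class $\mathcal K_\I$ (giving the convergent Beta-type time integral you describe), while the far part uses the pointwise Oseen-tensor decay $(|x-y|+\sqrt{t-s})^{-4}$ evaluated away from its singularity. The paper splits only the first argument of the bilinear form (two terms rather than your four), but that is a cosmetic difference.

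The one genuine loose end is your claim that the far-field contributions are ``uniform on $(0,\infty)$.'' With the pairing you describe, the far terms come out as $\int_0^t(\text{bounded integrand})\,ds\lesssim t$ (the paper's version, pairing $|\cdot|^{-4}$ in $L^2$ against $\|P_{k-1}\|_{L^4}^2$ via $\mathcal K_4$, gives $t^{3/4}$); either way the bound grows in $t$ and does not by itself yield membership in $L^\I(0,\I;L^q(B'))$. The paper closes this by first observing that for any $\tau>0$ one has $\sup_{\tau<t<\I}\|P_k\|_{L^q}\lesssim_{\tau,k}\|u_0\|_{L^{3,\I}}$ directly from the global Kato-class inclusion \eqref{KatoClassInclusion} with $q>3$, so that only a short initial time interval requires the local argument; you need to add this reduction (or else exploit the extra decay of the Oseen kernel in $t-s$ on the far region, which is more work). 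A smaller slip: you attribute uniform $L^{3,\I}$ control of $P_k$ to \eqref{KatoClassInclusion}, but that display only records the Kato classes $\mathcal K_q$ for $q\in(3,\I]$; the $L^\I_t L^{3,\I}$ bound is a separate (true) fact, and for the far field you can avoid invoking it by using $\mathcal K_4$ as the paper does.
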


\begin{proof} Note that for any $\tau>0$, $\sup_{\tau<t<\I} \|P_k\|_q \lesssim_{\tau,k} \|u_0\|_{L^{3,\I}}$ due to the fact that $P_k\in \mathcal K_q$ when $q>3$. We therefore only need to prove the inclusion for a short period of time. Let $\{B_k\}$ be a collection of concentric balls about $x_0$ of radii $\al^{k+1}R$, for some $\al\in (0,1)$. Fix $k_0\in \mathbb N_0$. Choose $\alpha$ so that $r = \al^{k_0+1}R$.

For $P_0=e^{t\Delta}u_0$ we have
\EQ{\|P_0\|_{L^q(B_0)}(t) \label{A1}
 &= \left\|\int_{\R^3}\frac{1}{t^{\frac 3 2}}e^{-\frac{|x-y|^2}{4t}} u_0(y) \, dy\right\|_{L^q(B_0)}\\
 &= \left\|\left(\int_{B^c}+\int_{B}\right)t^{-\frac 3 2} e^{-\frac{|x-y|^2}{4t}} u_0(y) \, dy\right\|_{L^q(B_0)} \\
&\lesssim \|u_0\|_{L^{3,\I}}t^{-\frac 3 2}\left\|\|e^{-\frac{|x-y|^2}{4t}}(1-\chi_{B}(y))\|_{L^{\frac 3 2,1}_y}\right\|_{L^q_x(B_0)}\\&\quad +\left\|e^{t\Delta}(\chi_B(y) u_0)\right\|_{L^q(\R^3)}.
}
For the far-field term, by \cref{lem:heatL321},
\EQ{ \label{A2}\left\| \|e^{-\frac{|x-y|^2}{4t}}(1-\chi_{B}(y))\|_{L^{\frac 3 2,1}_y}\right\|_{L^q_x(B_0)}&\lesssim_{R,\al,q}
\left\|\|e^{-\frac{|x-y|^2}{4t}}(1-\chi_{B}(y))\|_{L^{\frac 3 2,1}_y}\right\|_{L^\I_x(B_0)}\\&\lesssim_{R,\al} e^{\frac{-(R(1-\al))^2}{4t}}.}
For the near field term,  
\EQ{
\|e^{t\Delta}(u_0\chi_B )\|_{L^q(\R^3)} \lesssim \| u_0 \chi_B\|_{L^q(\R^3)}\lesssim \|u_0\|_{L^q(B)}.
} 
Therefore, 
\EQ{
 \big\| \|P_0\|_{L^q(B')}\big\|_{L^\I_t}
 \lesssim_{R,\al} \|u_0\|_{L^{3,\I}}+ \|u_0\|_{L^q(B)}.
}
If $k_0=0$ then we are done. 

If $k_0>0$ then we use induction. 
Observe that
\EQN{
 B(P_{k-1},P_{k-1}) =B(P_{k-1}\chi_{B_{k-1}},P_{k-1})+B(P_{k-1}(1-\chi_{B_{k-1}}),P_{k-1}).
}
For the first part,
\EQN{
 \|B(P_{k-1}\chi_{B_{k-1}},P_{k-1})\|_{L^q(B_k)}(t) &\lesssim_q\|P_k\|_{\mathcal K_\I} \int_0^t \frac 1 {(t-s)^{\frac 1 2} s^{\frac 1 2}} \| P_{k-1}\|_{L^q(B_{k-1})}(s) \,ds
 \\&\lesssim_{k,q} \|u_0\|_{L^{3,\I}}\| P_{k-1}\|_{L^q(B_{k-1})}(t) .
}
For the other part, by the pointwise estimate for the kernel $K$ of the Oseen tensor (see \cite{VAS,Tsai-book}),
\EQ{\label{oseen.estimate}
|D_k^{(m)} K(x,t)| \lesssim_m \frac 1 {(|x|+\sqrt{t})^{3+|m|}},
}
we have 
\EQN{
 \|B(P_{k-1}(1-\chi_{B_{k-1}}),P_{k-1})\|_{L^q(B_k)}(t)&\lesssim\left\|\int_0^t \int_{B_{k-1}^c} \frac {P_{k-1}\otimes P_{k-1}(y,s)} {(|x-y|+\sqrt{t-s})^4} \,dy\,ds\right\|_{L^q(B_k)}
 %\\&\lesssim \left\|\int_0^t \int_{B_{k-1}^c} \frac 1 {|x-y|^4} |P_{k-1}|^2 (y,s)\,dy\,ds\right\|_{L^q(B_k)}
 \\&\lesssim_{R,\al,k,q} \| |\cdot|^{-4}\|_{L^{2}(|\cdot|>R(\al^k-\al^{k+1}) )} \int_0^t \|P_{k-1}\|_{L^{4}}^2\,ds 
 \\& \lesssim_{R,\al,k,q} \int_0^t s^{(-1+\frac 3 {4})}\,ds \lesssim_{R,\al,k,q} t^{\frac 34},
} 
where we used the membership of $P_{k-1}$ in the Kato class $\mathcal K_4$.

We know by our base case that $P_0$ is in $L^\I(0,\I; L^q(B_0))$. We have just shown $B(P_{k-1},P_{k-1})\in L^\I(0,\I; L^q(B_0))$ whenever $P_{k-1}$ is in $ L^\I L^q(B_{k-1})$. Hence,
\[
 P_k = P_0-B(P_{k-1},P_{k-1})\in L^\I(0,\I; L^q(B_k)).
\]
This extends up to $k_0$ and so $P_{k_0}\in L^\I(0,\I; L^q(B'))$.
\end{proof}

\begin{remark}\label{Remark.heat}
Under the assumptions of Lemma \ref{lemma:Pkq} and
by classical estimates for the heat semi-group,
\[
\|e^{t\Delta} ( u_0\chi_B) \|_{L^\I(B')} (t)\lesssim t^{-\frac 3 {2q}} \|u_0\|_{L^q(B)}.
\]
Note that, combining \eqref{A1} and \eqref{A2},  
\EQ{
\|e^{t\Delta} (u_0(1-\chi_B)) \|_{L^\I(B')}(t) \lesssim_{T} \|u_0\|_{L^{3,\I}}t^{-\frac 3 {2q}},
}provided $t<T$, for any given time $T$.
Hence, 
\[
\| e^{t\Delta}u_0 \|_{L^\I(B')}(t)\lesssim_{u_0,T} t^{-\frac 3 {2q}}.
\]
\end{remark}

\section{Proof of Theorem \ref{thrm.main2}} 
 
Our foundation for the proof of Theorem \ref{thrm.main2} is the local smoothing result of Jia and \v Sver\'ak \cite{JS}, which we presently restate. Note that $L^2_\uloc$ is the space of uniformly locally square integrable functions and is defined by the norm
\[
\|f\|_{L^2_\uloc}^2 :=\sup_{x_0\in \R^3}\int_{B_1(x_0)}|f|^2\,dx. 
\]
Let $E^2$ denote the closure of $C_c^\I$ in the $L^2_\uloc$ norm.
 $L^{3,\I}$ embeds in $E^2$ (see the appendix of \cite{BT8}). 
Local smoothing as presented below refers to local energy solutions (a.k.a.~local Leray solutions using the terminology of \cite{JS}; see also \cite{BT8,KS,LR}). 
It is straightforward to show that weak $L^{3,\I}$-solutions are local energy solutions.

\begin{theorem}[Local smoothing {\cite[Theorem 3.1]{JS}}]\label{theorem:JSlocalsmoothing} 
 Let $u_0\in E^2$ be divergence free. Suppose $u_0|_{B_2(0)}\in L^p(B_2(0))$ with $\|u_0\|_{L^p(B_2(0))}<\I$ and $p>3$. Decompose $u_0 =U_0+U_0'$ with $\div U_0 =0,\,U_0|_{B_{4/3}} = u_0,\, \operatorname{supp} U_0 \Subset B_2(0)$ and $\|U_0\|_{L^p(\R^3)}<C(p,\|u_0\|_{L^p(B_2(0)})$. Let $U$ be the locally-in-time defined mild solution to \eqref{eq.NS} with initial data $U_0$. Then, there exists a positive $T=T(p,\|u_0\|_{L^2_\uloc},\|u_0\|_{L^p(B_2(0))})$ such that any local energy solution $u$ with data $u_0$ satisfies 
 \EQ{
 \|u-U\|_{C^\gamma_{par}(\overline {B}_{\frac{1}{2}}\times [0,T])}\le C(p,\|u_0\|_{L^p(B_2(0))},\|u_0\|_{L^2_\uloc}),
 } 
 for some $\gamma=\gamma(p)\in(0,1)$.
\end{theorem}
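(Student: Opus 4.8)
The plan is to prove local smoothing through an $\varepsilon$-regularity argument applied to the difference $v := u - U$, whose initial data vanishes near the center of the ball. First I would record the regularity of the ``good part'' $U$. Since $U_0 \in L^p(\mathbb{R}^3)$ with $p > 3$ is subcritical data, the local well-posedness theory of Kato produces a mild solution $U$ on some interval $[0, T_1]$ that is smooth for positive times and bounded on $\overline{B}_1 \times [0, T_1]$, with the relevant norms controlled by $\|U_0\|_{L^p} \lesssim C(p, \|u_0\|_{L^p(B_2(0))})$. Thus $U$ captures exactly the (possibly large) local part of the data on $B_{4/3}$.

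Next I would write the equation satisfied by $v = u - U$. Subtracting the two copies of \eqref{eq.NS} gives
\begin{equation*}
\partial_t v - \Delta v + v\cdot\nabla v + v\cdot\nabla U + U\cdot\nabla v + \nabla\pi = 0, \qquad \nabla\cdot v = 0,
\end{equation*}
with initial data $v_0 = u_0 - U_0$, which vanishes identically on $B_{4/3}$ by the construction of $U_0$. Because $u$ satisfies the Caffarelli--Kohn--Nirenberg local energy inequality and $U$ is smooth, $v$ inherits a local energy inequality with additional drift and source terms involving $U$; these are harmless since $U$ is bounded on $\overline{B}_1 \times [0,T_1]$.

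The heart of the argument is to verify the CKN smallness condition for $v$ at every parabolic point $z = (x,t)$ with $x \in B_{1/2}$ and $t \in (0,T]$, namely that the scaled functional $r^{-2}\iint_{Q_r(z)}(|v|^3 + |\pi|^{3/2})$ drops below the universal threshold $\varepsilon_0$ once $T$ is taken small. Two ingredients feed this. The local energy inequality, combined with the vanishing of $v_0$ near the center, forces $\sup_t \int_{B_\rho}|v|^2 + \iint|\nabla v|^2$ to be small for small $t$ in a neighborhood of $B_{1/2}$. The pressure, being non-local, must be split as $\pi = \pi_{\mathrm{loc}} + \pi_{\mathrm{harm}}$: the local part solves a Poisson problem with a localized nonlinear source and is controlled by Calder\'on--Zygmund theory from the local energy bound, while the harmonic part is estimated on interior balls by gradient estimates for harmonic functions, with its far-field input absorbed by the uniform $\|u_0\|_{L^2_\uloc}$ bound (equivalent to membership in $L^{3,\I}$).

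With the scaled quantity below $\varepsilon_0$, the CKN $\varepsilon$-regularity theorem yields a uniform bound on $v$ on a smaller cylinder, and a standard bootstrap upgrades this to parabolic H\"older continuity with exponent $\gamma = \gamma(p) \in (0,1)$ on $\overline{B}_{1/2}\times[0,T]$; adding back the smooth $U$ gives the claimed bound for $u - U$. The main obstacle I anticipate is the pressure, specifically its non-local harmonic part $\pi_{\mathrm{harm}}$: because pressure is determined by a global elliptic problem it cannot be localized outright, and transferring control of its influence to a neighborhood of the center requires marrying interior harmonic estimates with the uniform local energy bound on $u$, and doing so uniformly as the base point $z$ ranges over $B_{1/2}\times(0,T]$. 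The vanishing of $v_0$ on $B_{4/3}$ is precisely what makes this smallness achievable.
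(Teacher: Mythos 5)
This statement is not proved in the paper at all: it is imported verbatim as an external result, namely Theorem 3.1 of Jia and \v Sver\'ak \cite{JS}, so there is no internal proof to compare against. Your sketch does reconstruct the architecture of the actual argument in \cite{JS}: solve for the subcritical local part $U$ by Kato theory, study $v = u - U$ (a suitable weak solution of a perturbed system with locally vanishing data), prove smallness of the scaled local energy of $v$ near $t=0$, split the pressure into a locally determined Calder\'on--Zygmund part and a harmonic remainder, and conclude by $\varepsilon$-regularity. That is the right skeleton, and you correctly identify the pressure as the main difficulty.

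Three steps in your outline hide genuinely nontrivial work. First, the smallness of $\sup_t \int_{B_\rho}|v|^2 + \iint |\nabla v|^2$ for small $t$ is not a direct consequence of the local energy inequality plus $v_0|_{B_{4/3}}=0$: the cubic term and the nonlocal pressure feed the local energy of $v$ back into itself, and one needs an iteration and absorption argument over shrinking scales; this is the main technical lemma of \cite{JS}. Second, CKN $\varepsilon$-regularity cannot be applied off the shelf, because $v$ solves a Navier--Stokes system perturbed by the drift $U$; one needs the $\varepsilon$-regularity criterion for the perturbed system, using that $U\in L^\infty_t L^p_x$ with $p>3$ is subcritical so that the drift becomes small under parabolic rescaling. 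Third, the conclusion is H\"older continuity on the closed time interval $[0,T]$, i.e.\ up to $t=0$, with exponent $\gamma(p)$; interior regularity at each fixed $t>0$ only gives boundedness, and to obtain $|v(x,t)|\lesssim t^{\gamma/2}$ one needs a quantitative algebraic decay rate for the scaled energy as $r\to 0$, uniform in the base point, with $\gamma$ ultimately determined by the subcriticality gap $1/2-3/(2p)$. (Also, the final clause about ``adding back the smooth $U$'' is unnecessary: the asserted bound is for $u-U=v$ itself.) None of these gaps is fatal --- they are exactly what \cite{JS} supplies --- but as written the proposal is an outline rather than a proof.
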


See also \cite{BP2020,KMT,KMT2,Kwon} for more recent work on local smoothing which allows locally critical data which is also locally small; the above statement on the other hand is for locally sub-critical data. The dependence on $\|u_0\|_{L^2_\uloc}$ can be replaced with $\|u_0\|_{L^{3,\I}}$, which is why $L^2_\uloc$ is not mentioned in Theorem \ref{thrm.main2}.

\begin{proof}[Proof of \cref{thrm.main2}]
Without loss of generality, assume $B:=B_2(x_0)$ is centered at $x_0=0$. Assume $u_0|_B \in L^p(B)$. Let $U_0$ be a localization of the data to $B$ such that $u_0=U_0$ in $B_{4/3}(0) \subset B$, $\supp U_0 \Subset B$. This is done via a Bogovskii map \cite{GGP} as per the decomposition in \cref{theorem:JSlocalsmoothing}. Let $U$ be the locally-in-time defined mild solution to \eqref{eq.NS} with data $U_0$. Define $\{B_k\}_{k=0}^\I$ to be a collection of nested balls centered at $0$, with radii $\al^{k}/2$, for some $\al\in(0,1)$ to be specified later.   Then, recalling $P_0=e^{t\Delta}u_0$,
\EQ{ 
|u-P_0|(x,t)  \le& |u-U|(x,t) +|U-e^{t\De}U_0|(x,t) +|e^{t\De}(U_0-u_0)|(x,t) \\
	=:& I_1(x,t)+I_2(x,t)+I_3(x,t).
}% 
In the definition of $C^\gamma_{par}(\overline {B}_{\frac{1}{2}}\times [0,T])$, the exponent in the time-variable modulus of continuity is $\ga /2$.
By local smoothing \eqref{theorem:JSlocalsmoothing} and the fact that $\|u_0\|_{L^2_\uloc}\lesssim \|u_0\|_{L^{3,\I}}$, there exists $T = T(p,u_0)>0 $ so that \[I_1(x,t) \lesssim_{p,u_0} t^{\frac \ga 2},\] for some $\ga=\ga(p) \in (0,1)$, $x\in B_0$ and  $0<t<T$. 

For $I_2$, by \eqref{mild.est},  for any $p\in (3,\I]$ and $0<t<T$
\EQ{
 I_2(x,t) &\leq \| B(U,U)\|_{L^\I(\R^3)}(t)\\&\lesssim t^{\frac 1 2 -\frac 3 {2p}} \|U\|_{L^\I (0,T;L^p)}^2 \lesssim t^{\frac 1 2 -\frac 3 {2p}} \|U_0\|_{L^p}^2,
} 
where we possibly re-define $T$ to make it smaller than the timescale of existence for the  strong solution to \eqref{eq.NS}, i.e.~$T\lesssim \| U_0 \|_{L^p}^{-2p/(p-3)}$, and the time-scale coming from Theorem \ref{theorem:JSlocalsmoothing}.

Noting that $U_0-u_0=0$ in $B_{4/3}$, the last part, $I_3$, is broken into integrals over a shell and a far-field region as follows, 
\EQ{I_3(x,t) 
 \lesssim& \left(\int_{\frac 4 3\le|y|< 2}+\int_{|y|\ge 2}\right)t^{-\frac 3 2} e^{-\frac{|x-y|^2}{4t}} |U_0-u_0|(y) \, dy
 %\bigg\|\int_{\frac 4 3\le|y|< 2}t^{-\frac 3 2} e^{-\frac{|x-y|^2}{4t}} |U_0-u_0|(y) \, dy\bigg\|_{L^\I_x(B_0)} +\left\|\int_{|y|\ge 2}t^{-\frac 3 2} e^{-\frac{|x-y|^2}{4t}} u_0(y) \, dy \right\|_{L^\I_x(B_0)}\\
 =:I_{31}(x,t)+I_{32}(x,t).
 }
For $I_{31}$, using the fact that $U_0$ was solved for via a Bogovskii map, and therefore $\|U_0\|_{L^p(\R^3)} \lesssim \|u_0\|_{L^p(B)}$, we have for all $0<t<T$ and $x\in B_0$ that
\EQ{
I_{31}(x,t)&\lesssim 
%\int_{\frac 4 3\le|y|<2}t^{-\frac 3 2} e^{-\frac{|x-y|^2}{4t}} |U_0-u_0|(y) \, dy
%\\&\leq \bigg\| t^{-\frac 3 2} e^{-\frac{|x-y|^2}{4t}} \bigg\|_{L^{p^*}(\frac 4 3<|y|<2)} \| U_0 - u_0\|_{L^p (\frac 4 3<|y|<2)}
%\\&\leq 
t^{-\frac 3 2} e^{-\frac{(\frac 4 3-\frac 1 2)^2}{4t}} \| U_0 - u_0\|_{L^p (\frac 4 3\le|y|<2)}(t)
\lesssim_{u_0,p} t^{\frac \ga 2}.}
For $I_{32}$, by \cref{lem:heatL321}, the fact that $U_0(y) \equiv 0$ for $|y|\ge 2$, and taking $x\in B_0$ and $0<t<T$, we have that
\EQ{I_{32}(x,t)
 &\lesssim \int_{|y|\ge 2}t^{-\frac 3 2} e^{-\frac{|x-y|^2}{4t}} |u_0|(y) \, dy \\&\lesssim t^{-\frac 3 2} \|u_0\|_{L^{3,\I}}\left\|\|e^{-\frac{|x-y|^2}{4t}}(1-\chi_{B}(y))\|_{L^{\frac 3 2,1}_y}\right\|_{L^\I_x(B_0)}\\
&\lesssim_{u_0} t^{-\frac 3 2}e^{\frac{-(2-\frac 1 2)^2}{4t}}
\lesssim_{p,u_0} t^{\frac \ga 2}.}
 Therefore,  
 \[\|u-P_0\|_{L^\I(B_0)}(t) \lesssim_{p,u_0} t^{\min\{\frac \ga 2, \frac 12 - \frac 3 {2p}\}},\]
where the dependence on $u_0$ is via the quantities $\|u_0\|_{L^p(B)}$ and $\|u_0\|_{L^{3,\I}}$.

We inductively extend this estimate to higher Picard iterates. Fix $\sigma$ as in the statement of the theorem.
Recursively define the sequence $\{a_k\}$ by $a_{k+1} =\min\left\{\sigma, 1/2 -3/(2p)+a_k\right\}$ and $a_0 = \min\{\ga/2, 1/2 -3/(2p)\}$.
Assume for induction that
\[
 \|u-P_k\|_{L^\I(B_k)} \lesssim _{k,\al, p,u_0} t^{a_k},
\]
for $0<t<T$ and the dependence on $u_0$ is via the same quantities as above.
Note that
\EQ{
 |u-P_{k+1} |(x,t) \leq& |B(u-P_k, u-P_k)| + |B(u-P_k,P_k)|+ |B(P_k,u-P_k)| \\=:& J(x,t)+K(x,t)+L(x,t).
}

We split $J$ further as
\EQ{J(x,t) &\le | B((u-P_k) \chi_{B_k} , u-P_k)|+| B((u-P_k) (1-\chi_{B_k}) , u-P_k)|\\&=: J_{1}(x,t)+J_{2}(x,t).}
For the near-field, $J_1$, we use the inductive hypothesis to obtain for that, for $0<t<T$,
\EQ{\label{ineq:J1}
	\|J_{1}\|_{L^\I(B_{k+1})}(t)&%\lesssim\left\|\int_0^t \int_{B_k} \frac 1 {(|x-y| +\sqrt{t-s})^4} |u-P_k|^2 \,dy\,ds\right\|_{L^\I(B_{k+1})}\\ &
	\lesssim \int_0^t \frac 1 {\sqrt {t-s}} \| u-P_k \|_{L^\I(B_k)}^2 \,ds\\
	&\lesssim_{k,\al, p,u_0} t^{\frac 1 2 +2a_k} \lesssim_{k,\al, p,u_0} t^{\frac 1 2 -\frac 3{2p} +a_k}.
}
Considering $J_{2}$, for  $0<t<T$, we have by \eqref{oseen.estimate} that
%split \[|u-P_k|^2 = |u-P_0+P_0-P_k|^2\lesssim |\td u|^2+|P_0-P_k|^2,\] and note $\| \td u\|_{L^2},\,\| \td P_0-P_k\|_{L^2}\lesssim t^{1/4}$ the second by \cite{AB}. Then, 
\EQ{\label{ineq:J2}
\|J_2\|_{L^\I(B_{k+1})}(t)&\lesssim \int_0^t 
	\int_{|x-y|>\frac1 2 \al^k-\frac 12\al^{k+1}} \frac 1 {|x-y|^4} |u -P_k|^2(y,s)\,dy\,ds \\&\lesssim \frac t {(\al^{k}-\al^{k+1})^4} \| u -P_k\|_{L^2}^2(t)\lesssim_{\al,k,u_0} t^{\frac 3 2},
}
where we used the version of \eqref{ineq.BSSdecay} for higher Picard iterates \cite{AB}.

The terms $K$ and $L$ are treated identically and we only consider $K$. We begin by further splitting $K$ as
\[K(x,t) \leq | B((u-P_k) \chi_{B_k} , P_k)|+| B((u-P_k) (1-\chi_{B_k}) , P_k)|=: K_1(x,t)+K_2(x,t)
.\] For the near-field $K_1$ and for $0<t<T$ we have 
\[
\| K_1\|_{L^\I(B_{k+1})}(t) \lesssim \int_0^t \frac 1 {(t-s)^{\frac 1 2 + \frac 3 {2p}}} \| u-P_k \|_{L^\I(B_k)}(s) \| P_k\|_{L^p(B_k)}(s)\,ds.
\]
By \cref{lemma:Pkq},  $\sup_{0<t<\I} \| P_k\|_{L^p(B_k)} <\I$. Note that $1/2 + 3/(2p)<1$ precisely if $3<p$. Hence,
\EQ{\label{ineq:K1}
\| K_1\|_{L^\I(B_{k+1})}(t)
 % &\lesssim t^{\frac 1 2 - \frac 3 {2p}}\| u-P_k \|_{L^\I(B_k)} 
 % \\
 &\lesssim_{k,\al, p,u_0} t^{\frac 1 2 - \frac 3 {2p}+a_k},
}
for $0<t<T$ by the inductive hypothesis. 
For the far-field $K_2$, using  \cref{cor:decay} and taking $x\in B_{k+1}$, $0<t<T$ and $q\in (3/2,3)$, we have by \eqref{oseen.estimate} that
\EQ{
K_2(x,t) &\lesssim  \int_0^t \int_{B_k^c} \frac {1} {(|x-y| +\sqrt{t-s})^4} |u-P_k| |P_k| \,dy\,ds 
\\ & \lesssim 
\bigg\| \frac {1-\chi_{B_k}(\cdot)} {|x-\cdot |^4} \bigg\|_{ L^{r'}(0,T; L^{q'})} \|P_k \|_{L^\I(0,T;L^{3,\I})} \| u-P_k\|_{L^r(0,T;L^q)}
\\&\lesssim_{k,q,u_0} t^{\frac 1 {r'}+\frac 1 2},
}
where 
\[
1 = \frac 1 q +\frac 1 {q'} +\frac 1 3\text{ and }
1 = \frac 1 r +\frac 1 {r'}.
\]
Letting
\[
r= \frac {2q} {2q-3},
\]
we have
\[
\frac 1 {r'} = \frac 3 {2q}.
\]
\begin{comment} 
using the identity\snt{Use lemma here instead of expanding?}
\EQ{
 u-P_k =& B(u-P_{k-1},u-P_{k-1})+B(u-P_{k-1},P_{k-1})\\&+B(P_{k-1},u-P_{k-1})
 ,
}
we bound $K_2$ as
\EQN{K_2\lesssim&\int_0^t \int_{B_k^c} \frac {|P_k|} {(|x-y| +\sqrt{t-s})^4} |B(u-P_{k-1},u-P_{k-1})|\,dy\,ds\\
	&+\int_0^t \int_{B_k^c} \frac {|P_k| } {(|x-y| +\sqrt{t-s})^4}|B(u-P_{k-1},P_{k-1})+B(P_{k-1},u-P_{k-1})|\,dy\,ds\\
	&=: K_{21}+K_{22}.
}
For, $K_{21}$, by O'Neil's inequalities and Lemma \ref{lemma.decay}, for $q\in (3/2,3)$ to be specified momentarily and 
\[
r= \frac {2q} {2q-3},
\]
that for $x\in B_{k+1}$,
\EQ{
 &\int_0^t \int_{B_k^c} \frac 1 {(|x-y| +\sqrt{t-s})^4} |P_k|\,|B(u-P_{k-1},u-P_{k-1})| \,dy\,ds \\
 &\lesssim 
 \bigg\|\frac {1-\chi_{B_k}(y)} {|x-y|^4} \bigg\|_{L^{r'}_t L^{q',\I}_y}
 \| P_k\|_{L^\I_t L^{3,\I}_x}\|B(u-P_{k-1}, u-P_{k-1})\|_{L^{r}_tL^{q,1}_y}
 \\&\lesssim_{k,\al,q} t^{\frac 1 2 + \frac 1 {r'}},
 % &\lesssim \left\| \frac {1-\chi_{B_k}} {|x-\cdot\,|^4} P_k \right\|_{L^\I_xL^{r'}_tL^{q',\I}_y} \| B(u-P_{k-1}, u-P_{k-1})\|_{L^{r}_tL^{q,1}_y}\\
 % &\lesssim_{k,q} t^{\frac 1 2}\| P_k \|_{L^{r'}_tL^{3,\I}_y} \left\| \frac {1-\chi_{B_k}} {|x-\cdot\,|^4} \right\|_{L^\I_x L^{q'',\I}_y} \lesssim_{k,q} t^{\frac 1 2 + \frac 1 {r'}},
}
where
\[
 \frac 1 {r'} = 1-\frac 1 r =\frac 3 {2q};
\quad 
\frac 1 3+\frac 1 q+\frac 1 {q'} = 1,
\]
and we have used that the first term satisfies
\[
 \sup_{x\in \R^3} \bigg\|\frac {1-\chi_{B_k}(y)} {|x-y|^4} \bigg\|_{L^{r'}_t L^{q',\I}_y} \leq t^{\frac 1 {r'}} \bigg\|\frac {1-\chi_{B_k}(y)} {|y|^4} \bigg\|_{ L^{q',\I}_y} \lesssim_k t^{\frac 1 {r'}} \bigg\|\frac {1-\chi_{B_k}(y)} {|y|^4} \bigg\|_{ L^{1}_y}\lesssim_k t^{\frac 1 {r'}}.
\]
The remaining term, $K_{22}$, satisfies the exact same bound using Lemma \ref{lemma.decay2} instead of \ref{lemma.decay}.
\end{comment}
Observe that $1/r'<1$ and
\[
 \lim_{q\to {\frac 3 2}^+} \frac 1 {r'} = 1.
\]
Therefore, for any $\sigma <3/2$, by taking $q>3/2$ sufficiently close to $3/2$,  
\EQ{\label{ineq:K2}
 K_2(x,t)\lesssim_{k,\sigma,u_0,q} t^{\sigma}.
}
Altogether, \eqref{ineq:J1},\eqref{ineq:J2},\eqref{ineq:K1}, and \eqref{ineq:K2} imply that, for $0<t<T$,
\[
 \|u-P_{k+1}\|_{L^\I(B_{k+1})}(t) \lesssim_{k,\al, p,u_0,q} t^{a_{k+1}},
\]
for $k\ge 0$ and any $\sigma<3/2$. 
Note that
\[
a_{k+1} = \min \left\{ \sigma , (k+1)\bigg(\frac 1 2 -\frac 3 {2p}\bigg) +a_0 \right\}.
\]
Choose $k_0$ to be the smallest natural number so that 
\[
k_0\bigg(\frac 1 2 -\frac 3 {2p}\bigg) +a_0 \geq \sigma.
\]
Then, $a_{k_0}=\sigma$ and $a_{k}<a_{k-1}$ for $k=1,\ldots,k_0$.
Choose $\al$ so that $\rho = \al^{k_0}/2$.
It follows that 
\[
 \|u-P_{k_0}\|_{L^\I(B_{\rho}(x_0))}(t) \lesssim_{\rho, p,\sigma,u_0} t^{\sigma}.
\]

Regarding the asymptotic expansion, we observe that for $1\leq k\leq k_0$ and $(x,t)\in B_\rho(x_0)\times (0,T)$, 
\[
u = P_{k_0}+\mathcal O(t^\sigma),
\]
and
\[
|P_{k}-P_{k-1}|(x,t)\le |u-P_k|(x,t)+|u-P_{k-1}|(x,t)=\mathcal O(t^{a_{k-1}}).
\]
Hence,
\EQ{
u(x,t) &= \underbrace{P_{0} + \sum_{k=1}^{k_0} (P_{k}-P_{k-1})(x,t)}_{=P_{k_0}} + \mathcal O(t^\sigma)
\\&=\mathcal O (t^{-\frac 3 {2p}})
+\sum_{k=0}^{k_0-1} \mathcal O(t^{a_k}) + \mathcal O(t^\sigma) = \sum_{k=-1}^{k_0} \mathcal O(t^{a_k}),
}
where we are letting $a_{-1}=-3/(2p)$   and are using Remark \ref{Remark.heat} to obtain the asymptotics for $P_0$.
\end{proof}

\begin{comment}
\textbf{Thought:} If we can show $|P_{k+1}-P_k|\lesssim t^{a_k}$ where $a_k$ isn't truncated at $a_{k_0}$. Then,
\EQ{
u-P_{\bar k} &= B(P_{k-1},P_{k-1})- B(u,u)
\\&\leq B(P_{k-1},P_{k-1}) - B(P_{k-2},P_{k-2})+ B(P_{k-1},P_{k-1})- \cdots + B(P_{k_0},P_{k_0})- B(u,u),
}
which leads to an asymptotic expansion with arbitrarily many terms. In this case we may as well take $k_0 = 0$. Presumably something breaks down here. I think $K_2$ doesn't improve. 
\end{comment}

\section*{Acknowledgements}

The research of Z.~Bradshaw is supported in part by the Simons Foundation.

\bigskip 
\noindent Zachary Bradshaw, Department of Mathematical Sciences, 309 SCEN,
University of Arkansas,
Fayetteville, AR 72701. \url{zb002@uark.edu}

\bigskip 
\noindent Patrick Phelps, Department of Mathematical Sciences, 309 SCEN,
University of Arkansas,
Fayetteville, AR 72701. \url{pp010@uark.edu}

%\email{pp010@uark.edu}
%\address{Department of Mathematical Sciences
%309 SCEN
%University of Arkansas
%Fayetteville, AR 72701}
\end{document}